\newtheorem{theorem}{Theorem}
\newtheorem{remark}{Remark}
\newtheorem{lemma}{Lemma}
\newtheorem{proposition}{Proposition}
\newtheorem{definition}{Definition}
\newtheorem{assumption}{Assumption}
\newcommand{\beq}{\begin{equation}}
\newcommand{\eeq}{\end{equation}}
\newcommand{\beqa}{\begin{eqnarray}}
\newcommand{\eeqa}{\end{eqnarray}}
\newcommand{\beqan}{\begin{eqnarray*}}
\newcommand{\eeqan}{\end{eqnarray*}}
\newcommand{\Rset}{\mathbb{R}}
\newcommand{\Acal}{{\cal A}}
\newcommand{\Ccal}{{\cal C}}
\newcommand{\Gcal}{{\cal G}}
\newcommand{\Lcal}{{\cal L}}
\newcommand{\Wcal}{{\cal W}}
\newcommand{\Xcal}{{\cal X}}
\renewcommand{\(}{\left(}
\renewcommand{\)}{\right)}
\newcounter{l1}
\newcounter{l2}
\newcounter{l3}
\newcommand{\bdotlist}{\begin{list}{$\bullet$}{}}
\newcommand{\bboxlist}{\begin{list}{$\Box$}{}}
\newcommand{\bbboxlist}{\begin{list}{\raisebox{.005in}{{\tiny
$\blacksquare$ \ \ }}}{}}
\newcommand{\bdashlist}{\begin{list}{$-$}{} }
\newcommand{\blist}{\begin{list}{}{} }
\newcommand{\barablist}{\begin{list}{\arabic{l1}}{\usecounter{l1}}}
\newcommand{\balphlist}{\begin{list}{(\alph{l2})}{\usecounter{l2}}}
\newcommand{\bAlphlist}{\begin{list}{\Alph{l2}.}{\usecounter{l2}}}
\newcommand{\bdiamlist}{\begin{list}{$\diamond$}{}}
\newcommand{\bromalist}{\begin{list}{(\roman{l3})}{\usecounter{l3}}}
\title{\LARGE \bf
On Efficient Aggregation of Distributed Energy Resources
}
\author{Zuguang Gao, Khaled Alshehri, and John R. Birge
\thanks{Z. Gao and J. R. Birge are with Booth School of Business, The University of Chicago, Chicago, IL 60637, USA. 
        {\tt\small \{zuguang.gao, john.birge\}@chicagobooth.edu}}%
\thanks{K. Alshehri is with the Systems Engineering Department, King Fahd University of Petroleum and Minerals (KFUPM), Dhahran, Saudi Arabia. 
        {\tt\small kalshehri@kfupm.edu.sa}}      
}
\begin{document}

\maketitle
\thispagestyle{empty}
\pagestyle{empty}

\begin{abstract}
	
The rapid expansion of distributed energy resources (DERs) is one of the most significant changes to electricity systems around the world. Examples of DERs include solar panels, small natural gas-fueled generators, combined heat and power plants, etc. Due to the small supply capacities of these DERs, it is impractical for them to participate directly in the wholesale electricity market. We study in this paper an efficient aggregation model where a profit-maximizing aggregator procures electricity from DERs, and sells them in the wholesale market. The interaction between the aggregator and the DER owners is modeled as a Stackelberg game: the aggregator adopts two-part pricing by announcing a participation fee and a per-unit price of procurement for each DER owner, and the DER owner responds by choosing her payoff-maximizing energy supplies. We show that our proposed model preserves full market efficiency, i.e., the social welfare achieved by the aggregation model is the same as that when DERs participate directly in the wholesale market. We also note that two-part pricing is critical for market efficiency, and illustrate via an example that with one-part pricing, there will be an efficiency loss from DER aggregation, due to the profit-seeking behavior of the aggregator.


\end{abstract}

\section{INTRODUCTION}

The increasing adoption of distributed energy resources (DERs) challenges fundamental assumptions in existing electricity markets design and operation, as end-consumers not only become proactive, but also small-scale producers who can have significant impacts on the power system~\cite{ritzenhofen2016structural}. According to the North American Electric Reliability Corporation
(NERC),  DER is ``any resource on the distribution system that produces electricity and is not otherwise included in the formal NERC definition of the Bulk Electric System (BES)"\cite{derdefinition}. The existence of DERs causes a fundamental shift, because electric power demand is largely assumed to be fixed from independent market operators' (ISOs) perspective \cite{eugene,eugene2,eugene3,EnergyReserve,feng}. This fundamental shift calls for the quantification of the impacts of DERs on wholesale markets design. Such quantification is not easy; ISOs do not have oversight over the distribution power network, and hence, cannot include DER owners as market participants. Furthermore, even if ISOs can oversee the distribution power system, it would be a significant burden and impractical to include DER supply directly into the wholesale electricity market operations through ISOs, due to the communicational, computational, and operational complexity. \footnote{The number of market participants would grow from hundreds to millions.}

\begin{figure}
	\centering
	\includegraphics[width=.8\linewidth]{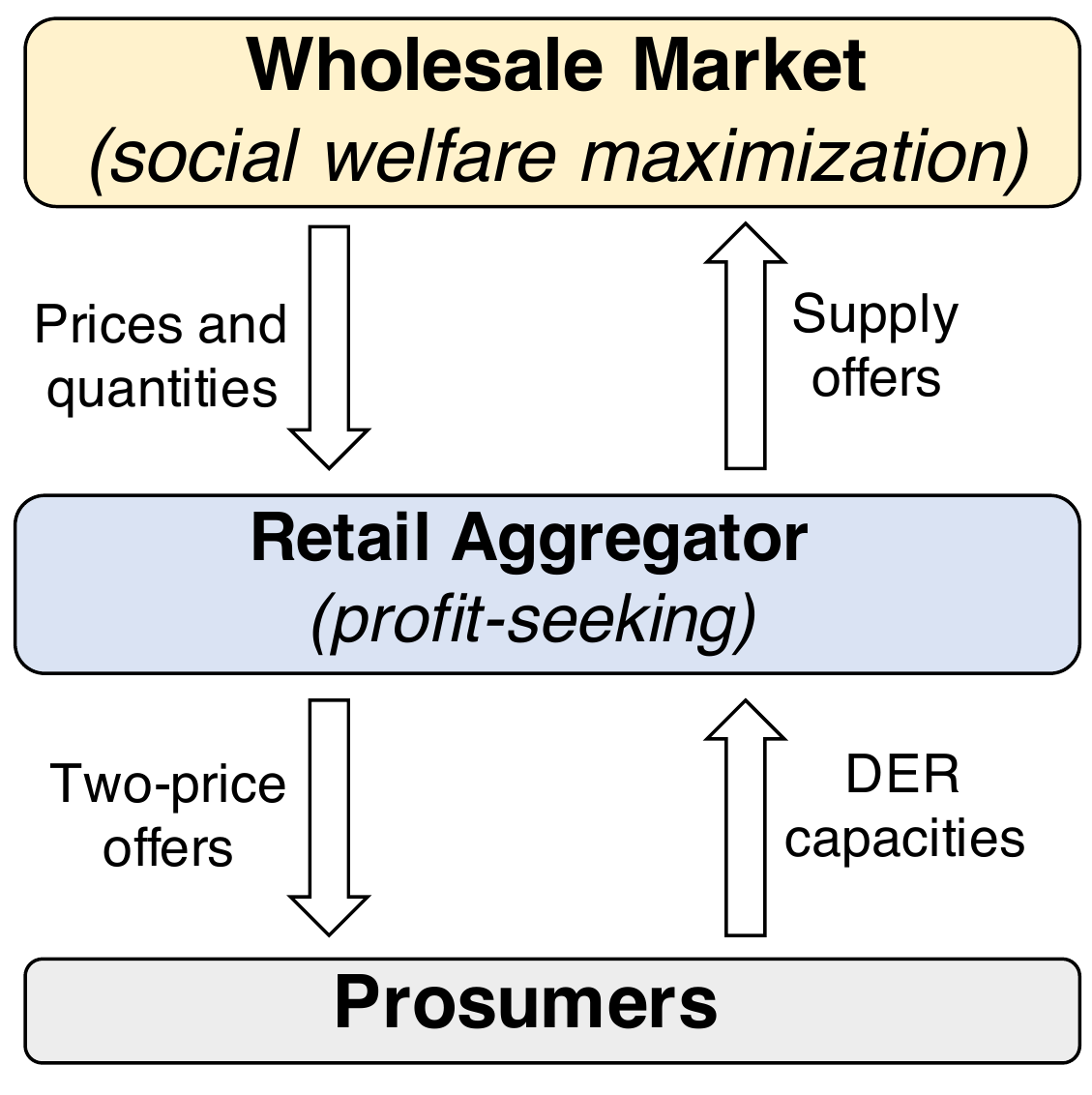}
	\caption{Overall interactions in the proposed efficient aggregation model}
	\label{sketch}
\end{figure}

Different models have been proposed to include DER supply into wholesale electricity markets. One possibility is to have a Distribution System Operator (DSO) acting as a market manager at the distribution level, and finding socially-optimal dispatch, similar to ISOs \cite{Lian,Ntakou,Manshadi,Sotkiewicz,Terra,Huang}. Another possibility is to have fully-distributed electricity market designs, in which end-consumers can trade energy among themselves \cite{Moret,Rahimi}. The third model, which we adopt here, is to have an aggregating company whose role is to collect energy from DER owners, and participate in the wholesale market as a producer of electricity. This model has been adopted by California ISO and New York ISO~\cite{CAISODER,DER_NYISO},  and seems to be realistic for practical implementation, especially with FERC's recent Order No. 2222 in September, 2020~\cite{FERC2222}. The aggregator here buys DER supply from their owners, and bids directly to the ISO similar to generating companies. The relationship between the aggregator and prosumers in the same geographical footprint is naturally monopolistic, where aggregators become price-making in retail electricity markets as they can send price offers to prosumers in order to collect DER supply. Such price offers need to be high-enough so that DER owners are attracted to sell, but small-enough so that the aggregator can maximize her profits. This profit-seeking behavior can impact the overall electricity market efficiency, but at the same time, due to the impracticality of direct DER participation into wholesale markets, aggregators are necessary and important players. This gives rise to the following important question: {\em In the presence of a profit-seeking and a monopolistic aggregator, is there an aggregation model that can attain a socially optimal (efficient) market outcome?} The presence of such a mechanism can in fact be significant. First, in reality, aggregators are mostly profit-seeking, and are often monopolistic, which makes the markets prone to efficiency losses as demonstrated in \cite{CISS,TPS}. Second, it is infeasible for DER owners to participate in wholesale markets, so the presence of such intermediaries is inevitable. Third, if such a mechanism can be designed, it would address various debates surrounding whether or not DER aggregation need to be done by profit-making entities, or social intermediaries. 

In this paper, we address the above question by proposing a DER aggregation model that yields efficient market outcomes, even when a DER aggregator $\Acal$ is monopolistic and profit-seeking. We analytically show that with our proposed model, the optimal DER capacities being integrated through $\Acal$ are equivalent to a benchmark (ideal, not realistic) case in which DER owners can directly participate in the wholesale market. Briefly, our aggregation model utilizes a two-price offers $\bf(P,p)$ from $\Acal$ to the prosumers, where $\bf P$ corresponds to a fixed DER owner participation costs (connection charges), and the other prices $\bf p$ are for marginal acquisition of DER capacities. We remark here that having one-price offer $\bf p$ would not yield efficient market outcomes due to the monopolistic behavior of $\Acal$, as shown in \cite{CISS,TPS}. For an overall illustration, refer to Figure \ref{sketch}. Here, we adopt a game-theoretic analysis of the relationship between the prosumers and $\Acal$, and show that the equilibrium prices  $(\bf P^*,p^*)$ and DER capacities $\bf{x}^*(\bf P^*,p^*)$ attain efficient market outcomes. 

The paper is organized as follows: Section II discusses the benchmark ideal model. In Section III, we propose an efficient DER integration model, and state the main theoretical results of the paper. Via illustrative examples, we conduct simulations in Section IV to gain some practical insights. The paper concludes in Section V. 

\subsubsection*{Notation}
\label{sec:notation}
We let $\Rset$ denote the set of real numbers, and $\Rset_+$ (resp. $\Rset_{++}$) denote the set of nonnegative (resp. positive) numbers. For $z\in\Rset$, we let $[z]^+ := \max\{z, 0\}$. 
In any optimization problem, a decision variable $x$ at optimality is denoted by $x^*$. We use boldface to distinguish a vector $\bf{x}$ from a scalar $x$. For an event $\Xcal$, we let $\mathds{1}\{\Xcal\}$ denote the indicator function.   

\section{DIRECT PROSUMER PARTICIPATION MODEL (BENCHMARK)}

In this section, we introduce an ideal model where prosumers can participate (buy or sell) directly in the wholesale market, and no aggregator is present. This model, though not realistic, serves as a benchmark for evaluating the market efficiency of our following models. There are three parties in this model:
prosumers who can buy and sell energy directly in the wholesale market, conventional generators who generate and sell electricity in the wholesale market, and an independent system operator (ISO) who clears the wholesale market. In the following, we describe the optimization problems solved by each of these three parties. Figure \ref{sketch2} illustrates the interactions between prosumers and the ISO.  


\begin{figure}
	\centering
	\includegraphics[width=.8\linewidth]{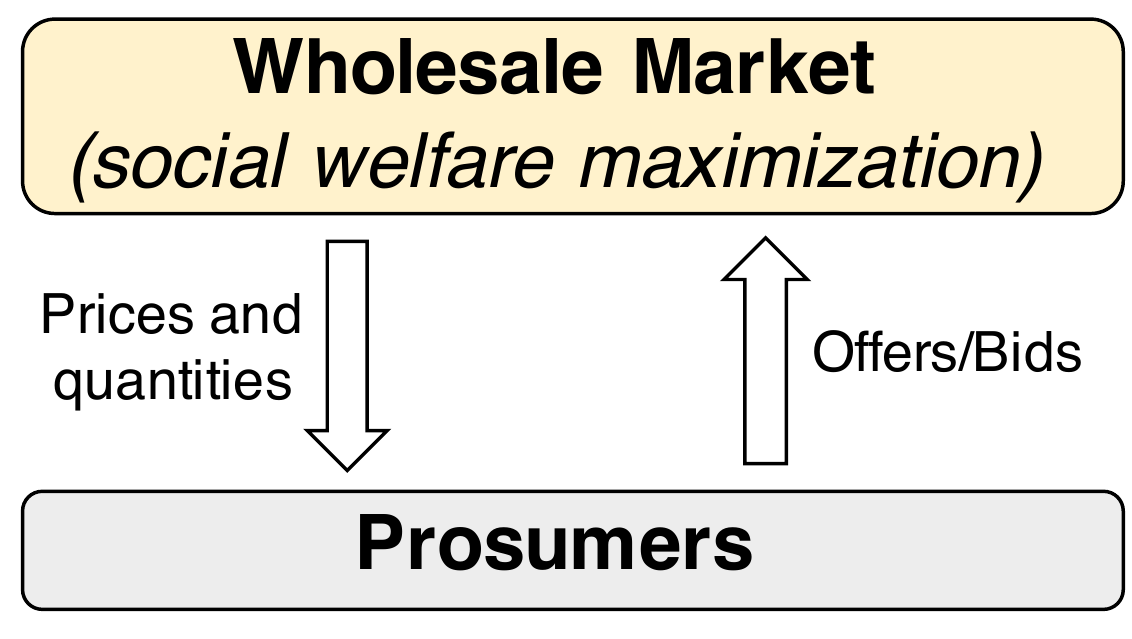}
	\caption{Benchmark model}
	\label{sketch2}
\end{figure}

\subsection{Prosumer's Problem}
Consider a power network with $n$ nodes (locations). At each location~$i$, there are~$N_i$ number of prosumers. We assume that each prosumer~$j$ at location~$i$ can purchase energy at the wholesale price~$\lambda^i$. Furthermore, prosumer~$j$ at location~$i$ is endowed with a capacity~${C}^i_j\ge 0$ of power production\footnote{$C^i_j$ can be zero for consumers with no DER supply, so the model includes those regular consumers.} from a collection of resources, such as solar panels, wall-mounted batteries, and plug-in electric vehicles. The power produced can be consumed locally by prosumer~$j$ or sold (partially) back to the wholesale market, again at the wholesale price~$\lambda^i$. The prosumer observes~$\lambda^i$ and decides the amount of energy to buy/sell. Let $u^i_j$ be prosumer~$j$'s utility of power consumption. We make the following assumption on prosumers' utility of consumption.
\begin{assumption}\label{assum:prosu}
	Each prosumer's utility of consumption~$u^i_j$ is increasing, strictly concave, and differentiable.
	Furthermore, the domain of~$u^i_j$ is $[0,Z]$ where $Z > C^i_j$ is some (large) upper bound of the amount of energy a prosumer can consume. We assume that $\frac{\partial u^i_j(z)}{\partial z}\to \infty$ as $z\to 0$, and $\frac{\partial u^i_j(z)}{\partial z}\to 0 $ as $z\to Z$.
\end{assumption}

We can then write prosumer~$j$'s optimization problem as
\begin{equation}\label{direct:pros}
	\begin{aligned}
		&\max_{x^i_j-d^i_j} \pi^i_j(x^i_j,d^i_j):=\lambda^i(x^i_j-d^i_j)+u^i_j(C^i_j+d^i_j-x^i_j)\\
		&\quad \text{s.t. }\qquad  C^i_j-Z\le x^i_j-d^i_j \le C^i_j,
	\end{aligned}
\end{equation}
where $d^i_j$ is the amount of energy prosumer~$j$ purchases and $x^i_j$ is the amount of energy she sells. For selling~$x^i_j$ at the wholesale price, prosumer~$j$ receives~$\lambda^ix^i_j$; for buying~$d^i_j$ at the wholesale price, she is charged~$\lambda^id^i_j$. The prosumer leaves with~$C^i_j +d^i_j-x^i_j$ to consume and her utility from consumption would be $u^i_j(C^i_j +d^i_j-x^i_j)$. While Assumption~\ref{direct:pros} further imposes strict concavity of prosumers' utilities, our analysis throughout this paper remains largely applicable to generic concave utilities, but strict concavity allows us to derive unique analytical solutions and gain deep insights. 

 We note that since prosumers at location~$i$ can buy and sell energy at the same price~$\lambda^i$, prosumer~$j$ essentially has $x^i_j-d^i_j$ as the single decision to make in solving~\eqref{direct:pros}, while we keep both $x^i_j$ and $d^i_j$ in the notation to clearly represent the amount of selling and buying, and to be consistent with the notation in the following sections. 
\begin{lemma} \label{lem:prosdirect} 
	Under Assumption~\ref{assum:prosu}, given any~$\lambda^i$, there exists a unique optimal solution $(x^i_j-d^i_j)^*$ for the prosumer's problem~\eqref{direct:pros} which satisfies
	\begin{align}\label{eq:prosopt}
		\frac{\partial u^i_j(z)}{\partial z}\bigg|_{z=C^i_j-(x^i_j-d^i_j)^*} = \lambda^i.
	\end{align}
\end{lemma}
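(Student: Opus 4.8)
The plan is to collapse the two-variable program~\eqref{direct:pros} into a one-dimensional strictly concave maximization and then read off the optimality condition. I would introduce the net-injection variable $t := x^i_j - d^i_j$ together with the induced consumption level $z := C^i_j + d^i_j - x^i_j = C^i_j - t$. As $t$ ranges over the feasible interval $[C^i_j - Z,\, C^i_j]$, the variable $z$ ranges exactly over $[0,Z]$, which is precisely the domain of $u^i_j$ assumed in Assumption~\ref{assum:prosu}. After this change of variables the objective reads $\lambda^i(C^i_j - z) + u^i_j(z) = \lambda^i C^i_j + g(z)$, where $g(z) := u^i_j(z) - \lambda^i z$; since $\lambda^i C^i_j$ is a constant, problem~\eqref{direct:pros} is equivalent to $\max_{z \in [0,Z]} g(z)$.

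Next I would settle existence and uniqueness. The map $g$ is continuous on the compact interval $[0,Z]$, so a maximizer exists; and since $u^i_j$ is strictly concave while $-\lambda^i z$ is affine, $g$ is strictly concave, so the maximizer $z^*$ is unique. Undoing the substitution, $(x^i_j-d^i_j)^* = C^i_j - z^*$ is then the unique optimal solution of~\eqref{direct:pros}.

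Then I would show that $z^*$ is interior and extract the first-order condition. Differentiating, $g'(z) = \frac{\partial u^i_j(z)}{\partial z} - \lambda^i$. By the Inada-type conditions in Assumption~\ref{assum:prosu}, $g'(z) \to +\infty$ as $z \to 0$ and $g'(z) \to -\lambda^i < 0$ as $z \to Z$ (using positivity of $\lambda^i$), so $g$ is strictly increasing near $0$ and strictly decreasing near $Z$; hence $z^* \in (0,Z)$ and stationarity gives $g'(z^*) = 0$, i.e. $\frac{\partial u^i_j(z)}{\partial z}\big|_{z=z^*} = \lambda^i$. Substituting $z^* = C^i_j - (x^i_j-d^i_j)^*$ yields~\eqref{eq:prosopt}. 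Conversely, strict concavity ensures that any point obeying this stationarity condition is the unique global maximizer, so the condition fully characterizes the optimum.

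I do not expect a genuine difficulty here; the one step requiring care is justifying that the optimum lies in the interior of $[0,Z]$, so that optimality holds as the stated equality rather than as a one-sided KKT inequality; this is exactly where the boundary behavior of $u^i_j$ and the positivity of $\lambda^i$ enter. A minor bookkeeping point is to verify that the feasible set for $t$ maps bijectively onto the domain $[0,Z]$ of $u^i_j$, which is what makes the single-variable reduction legitimate.
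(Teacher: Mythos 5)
Your argument is correct and is exactly the standard reasoning the paper has in mind: the paper itself offers no written proof of this lemma, stating only that it ``directly follows from the properties of problem~\eqref{direct:pros},'' and your reduction to the strictly concave one-variable problem $\max_{z\in[0,Z]}\bigl(u^i_j(z)-\lambda^i z\bigr)$, with interiority secured by the Inada-type boundary conditions of Assumption~\ref{assum:prosu} (and positivity of $\lambda^i$), is precisely the omitted argument. No gaps.
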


The above Lemma directly follows from the properties of problem \eqref{direct:pros}. Without loss of generality, and for ease of exposition, we restrict our attention to the case in which at most one of $x^i_j$ and $d^i_j$ can be nonzero.
We first solve~\eqref{direct:pros} for $(x^i_j-d^i_j)^*$. Then, we let ${x^i_j}^* = \left[(x^i_j-d^i_j)^*\right]^+$ and ${d^i_j}^* = \left[-(x^i_j-d^i_j)^*\right]^+$. We also use the notation~${x^i_j}^*(\lambda^i)$ and~${d^i_j}^*(\lambda^i)$ to denote the optimal response of prosumer~$j$ at location~$i$ for a given wholesale market price~$\lambda^i$.

\subsection{Generator's Problem}\label{sec:benchgene}
Let $G_i$ denote the number of dispatchable conventional generators at location~$i$. Generator~$j$ at location~$i$ chooses to supply $y^i_j \in \left[\underline{y}^i_j,\overline{y}^i_j\right] $. Let $c^i_j(y^i_j)$ be denote the production cost. We have the following assumption. 
\begin{assumption}\label{assum:gene}
	Each generator's cost function~$c^i_j$ is increasing, strictly convex, and differentiable in $\left[\underline{y}^i_j,\overline{y}^i_j\right]$. Furthermore, we let $\frac{\partial c^i_j(y^i_j)}{\partial y^i_j}\to 0$ as $y^i_j\to \underline{y}^i_j$ and $\frac{\partial c^i_j(y^i_j)}{\partial y^i_j}\to \infty$ as $y^i_j\to \overline{y}^i_j$.
\end{assumption}
By selling $y^i_j$, generator~$j$ earns a compensation $\lambda^iy^i$. Given a wholesale price $\lambda^i$, generator~$j$ maximizes its payoff by solving
\begin{align}\label{direct:gene}
	\max_{y^i_j\in\left[\underline{y}^i_j,\overline{y}^i_j\right]}\hat{\pi}^i_j(y^i_j) := \lambda^i_jy^i_j-c^i_j(y^i_j).
\end{align}
\begin{lemma}\label{lem:geneopt}
	Under Assumption~\ref{assum:gene}, given any~$\lambda^i$, there exists a unique optimal solution~${y^i_j}^*$ for the generator's problem~\eqref{direct:gene} which satisfies
	\begin{align}\label{eq:geneopt}
		\frac{\partial c^i_j(y^i_j)}{\partial y^i_j}\bigg|_{y^i_j = {y^i_j}^*} = \lambda^i.
	\end{align}
\end{lemma}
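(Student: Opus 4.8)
The plan is to exploit strict concavity of the generator's objective together with the Inada-type boundary conditions in Assumption~\ref{assum:gene}, exactly in the spirit of Lemma~\ref{lem:prosdirect}. First I would observe that $\hat{\pi}^i_j(y^i_j) = \lambda^i y^i_j - c^i_j(y^i_j)$ is continuous on the compact interval $[\underline{y}^i_j,\overline{y}^i_j]$ and strictly concave (since $c^i_j$ is strictly convex by Assumption~\ref{assum:gene}), so a maximizer of \eqref{direct:gene} exists by the extreme value theorem and is unique by strict concavity. This settles existence and uniqueness of ${y^i_j}^*$.

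Next I would establish that this maximizer is interior and characterized by the stationarity condition \eqref{eq:geneopt}. Differentiating, $\frac{\partial \hat{\pi}^i_j}{\partial y^i_j}(y^i_j) = \lambda^i - \frac{\partial c^i_j}{\partial y^i_j}(y^i_j)$. By strict convexity, $\frac{\partial c^i_j}{\partial y^i_j}$ is continuous and strictly increasing on $[\underline{y}^i_j,\overline{y}^i_j]$, and the boundary limits in Assumption~\ref{assum:gene} say it increases from $0$ to $+\infty$; hence for any admissible wholesale price $\lambda^i > 0$ there is exactly one point ${y^i_j}^* \in (\underline{y}^i_j,\overline{y}^i_j)$ with $\frac{\partial c^i_j}{\partial y^i_j}({y^i_j}^*) = \lambda^i$. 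At that point $\frac{\partial \hat{\pi}^i_j}{\partial y^i_j}$ changes sign from positive to negative (again by monotonicity of $\frac{\partial c^i_j}{\partial y^i_j}$), so ${y^i_j}^*$ is a global maximizer of $\hat{\pi}^i_j$ over $[\underline{y}^i_j,\overline{y}^i_j]$, and by the uniqueness from the first step it coincides with the optimal solution there.

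The only real obstacle is the corner analysis: a priori the maximizer of a concave function on a closed interval could sit at an endpoint, in which case \eqref{eq:geneopt} would hold merely as an inequality. The conditions $\frac{\partial c^i_j}{\partial y^i_j}\to 0$ as $y^i_j\to\underline{y}^i_j$ and $\frac{\partial c^i_j}{\partial y^i_j}\to\infty$ as $y^i_j\to\overline{y}^i_j$ are precisely what rule this out, pushing the optimum strictly inside $(\underline{y}^i_j,\overline{y}^i_j)$. I would also flag that positivity of the price $\lambda^i$ is implicitly used here, which is harmless in the market setting since wholesale prices are nonnegative. Everything else is routine one-dimensional convex-analysis bookkeeping, parallel to the proof of Lemma~\ref{lem:prosdirect}.
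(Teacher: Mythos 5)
Your proposal is correct and is exactly the argument the paper has in mind: the paper gives no explicit proof of Lemma~\ref{lem:geneopt}, stating only that it ``directly follows from the properties of problem~\eqref{direct:gene},'' and those properties (strict concavity of the payoff on a compact interval for existence/uniqueness, plus the Inada-type limits on $\partial c^i_j/\partial y^i_j$ forcing an interior stationary point) are precisely what you spell out. Your observation that strict positivity of $\lambda^i$ is implicitly needed for the interior first-order condition is a fair and accurate caveat that the paper leaves unstated.
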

The above Lemma directly follows from the properties of problem \eqref{direct:gene}. We also use the notation~${y^i_j}^*(\lambda^i)$ to denote the optimal response of generator~$j$ at location~$i$ at a given wholesale price~$\lambda^i$.

\subsection{The Economic Dispatch Problem}\label{sec:benchiso}

Wholesale electricity markets in the United States and other countries are managed by independent system operators (ISOs).  An ISO clears the market by matching supply and demand via social welfare maximization (in practice, this is often done by production cost minimization to meet fixed system demands \cite{eugene2,eugene3,EnergyReserve}), while ensuring that the power flows satisfy the network and line capacity constraints. Specifically, let $X^i = \sum_{j\in[N_i]}x^i_j$ be the total power supply at node~$i$ from prosumers; let $Y^i = \sum_{j\in[G_i]}y^i_j$ be the total power supply at node~$i$ from conventional generators; and let $D^i=\sum_{j\in [N_i]}d^i_j$ be the total demand at node~$i$. Furthermore, we let ${\bf f}$ be the vector of capacities of transmission lines in the power network, and let ${\bf B}$ be the line-to-node incidence matrix. The system operator chooses a vector ${\bf h}$, where each element $h^i$ is the net injection to node~$i$. We have the following network constraints:
\begin{subequations}\label{eq:directcons}
	\begin{align}
		&{\bf h = D - Y - X},\label{eq:demsup}\\
		&{\bf 1}^T {\bf h}= 0, \qquad {\bf Bh}\le {\bf f}. \label{eq:netcons}	\end{align}
\end{subequations}
We note that~\eqref{eq:demsup} ensures the total supply matches the total demand at each node;~\eqref{eq:netcons} ensures that the total net injection by the system operator is zero over the power network (here, ${\bf 1}$ is a vector of ones), and the total power transmission at each line does not exceed its capacity. In addition to the network constraints, the ISO needs to also consider all participant-specific constraints described earlier in problems \eqref{direct:pros} and \eqref{direct:gene}: 
\begin{equation}\label{eq:directcons_p}
		{\bf C-Z\le x-d \le C, \qquad  \underline{y} \le y  \le \overline{y} } 	
\end{equation}
The objective of the system operator is to maximize the social welfare, which includes the prosumer surplus~({\sf PS}), generator surplus~({\sf GS}), and merchandizing surplus ({\sf MS}):
\begin{subequations}
	\begin{align}
		{\sf PS} &:= \sum_{i\in[n]}\left(\sum_{j\in [N_i]}u^i_j(d^i_j - x^i_j + {C}^i_j) - \lambda^i(h^i + Y^i)\right),\label{directcs}\\
		{\sf GS} &:= \sum_{i\in[n]}\sum_{j\in [G_i]}\left(\lambda^iy^i_j - c^i_j(y^i_j)\right), \ {\sf MS} := \sum_{i\in[n]}\lambda^ih^i\label{directms}.
	\end{align}
\end{subequations}
where we have imposed the relationship~\eqref{eq:demsup} in deriving~\eqref{directcs}. The social welfare that the system operator optimizes is the sum of {\sf PS}, {\sf GS}, and {\sf MS}. After canceling terms, the social welfare can be written as
\begin{align}\label{eq:WB}
	\mathcal{W}_B:= \sum_{i\in[n]} \left(\sum_{j\in[N_i]}u^i_j(d^i_j +{C}^i_j-x^i_j) -  \sum_{j\in [G_i]}c^i_j(y^i_j)\right).
\end{align}
The system operator's economic dispatch problem is then:
\begin{equation}\label{eq:so}
	\begin{aligned}
		&\max \qquad \mathcal{W}_B ({\bf h, x-d, y})\\
		&  \text{subject to}\qquad \eqref{eq:directcons} - \eqref{eq:directcons_p}.
	\end{aligned}
\end{equation}
\begin{assumption}\label{assum:exist}
	The system operator's economic dispatch problem~\eqref{eq:so} is feasible.
\end{assumption}

%
\begin{proposition}[Competitive Equilibrium]\label{prop:compeq}
	Under Assumptions~\ref{assum:prosu}-\ref{assum:exist}, there exists a unique optimal solution $\({\bf h^*, (x-d)^*, y^*}\)$ to~\eqref{eq:so}. Denote the optimal Lagrange multipliers of constraints~\eqref{eq:demsup} by ${\pmb{\lambda}}$. Then, the following statements are true:
	\begin{itemize}
		\item $\left(\bf x-d\right)^*$ and $\pmb{\lambda}$ satisfy~\eqref{eq:prosopt}. 
				\item ${\bf y}^*$  and $\pmb{\lambda}$ satisfy~\eqref{eq:geneopt}.
	\end{itemize}
\end{proposition}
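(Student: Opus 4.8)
The statement is the familiar ``competitive equilibrium coincides with the social optimum'': the multipliers $\pmb{\lambda}$ of the nodal balance \eqref{eq:demsup} are exactly the locational prices at which every prosumer solving \eqref{direct:pros} and every generator solving \eqref{direct:gene} reproduces the centralized dispatch. Accordingly, the plan is to check that \eqref{eq:so} is a well-posed strictly concave program, write its Karush--Kuhn--Tucker (KKT) system, and match the stationarity conditions against the first-order conditions \eqref{eq:prosopt} and \eqref{eq:geneopt} from Lemmas~\ref{lem:prosdirect} and \ref{lem:geneopt}.

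For existence and uniqueness, note that $\mathcal{W}_B$ and every constraint in \eqref{eq:directcons}--\eqref{eq:directcons_p} depend on $({\bf x},{\bf d})$ only through the net quantities ${\bf x}-{\bf d}$ (this is precisely why the paper restricts to at most one of $x^i_j,d^i_j$ being nonzero), so one may work in the variables $({\bf x}-{\bf d},{\bf y},{\bf h})$. On these, $\mathcal{W}_B$ is strictly concave in $({\bf x}-{\bf d},{\bf y})$ --- each $u^i_j\big(C^i_j-(x^i_j-d^i_j)\big)$ is strictly concave by Assumption~\ref{assum:prosu} and injectivity of $t\mapsto C^i_j-t$, and each $-c^i_j$ is strictly concave by Assumption~\ref{assum:gene} --- and is independent of ${\bf h}$. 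Eliminating ${\bf h}$ via the equality \eqref{eq:demsup}, the feasible set is a polyhedron, nonempty by Assumption~\ref{assum:exist} and compact since \eqref{eq:directcons_p} bounds ${\bf x}-{\bf d}$ and ${\bf y}$ (hence ${\bf h}$). Weierstrass gives an optimum; strict concavity makes $({\bf x}-{\bf d})^*$ and ${\bf y}^*$ unique; and \eqref{eq:demsup} then pins down ${\bf h}^*$, yielding the unique optimum $({\bf h}^*,({\bf x}-{\bf d})^*,{\bf y}^*)$.

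Since \eqref{eq:so} maximizes a concave objective over affine constraints and is feasible, the KKT conditions hold at the optimum and are also sufficient. I would attach $\pmb{\lambda}$ to \eqref{eq:demsup}, written so that the coefficient of $y^i_j$ and of $x^i_j-d^i_j$ carries the same sign as the corresponding revenue term in \eqref{direct:gene} and \eqref{direct:pros} (this sign bookkeeping is what lets $\pmb{\lambda}$ play the role of the price), and attach additional multipliers to ${\bf 1}^T{\bf h}=0$, ${\bf B}{\bf h}\le{\bf f}$, and the box constraints \eqref{eq:directcons_p}. Then stationarity in $y^i_j$ gives $\partial c^i_j/\partial y^i_j=\lambda^i$ up to the box multipliers on $\underline{y}^i_j\le y^i_j\le\overline{y}^i_j$, and stationarity in $x^i_j-d^i_j$ gives $\partial u^i_j(z)/\partial z\big|_{z=C^i_j-(x^i_j-d^i_j)^*}=\lambda^i$ up to the box multipliers on $C^i_j-Z\le x^i_j-d^i_j\le C^i_j$.

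The crux --- and the step I expect to be the main obstacle --- is showing those box multipliers vanish, i.e.\ that the optimum is interior; the limiting conditions in Assumptions~\ref{assum:prosu}--\ref{assum:gene} are tailored for this. The blow-up conditions ($\partial u^i_j/\partial z\to\infty$ as $z\to0$ and $\partial c^i_j/\partial y^i_j\to\infty$ as $y^i_j\to\overline{y}^i_j$) rule out the endpoints $x^i_j-d^i_j=C^i_j$ and $y^i_j=\overline{y}^i_j$ immediately, since a finite multiplier cannot balance an infinite derivative. Excluding the opposite endpoints ($z=Z$ and $y^i_j=\underline{y}^i_j$, where the derivatives tend to $0$) additionally needs $\lambda^i\ge 0$ --- and, to rule out the degenerate $\lambda^i<0$, one uses that $u^i_j$ has strictly positive marginal utility on $(0,Z)$ together with the fact that a feasible dispatch cannot price every node at $\lambda^i\le 0$ (which would force every prosumer to consume $Z$ and every generator to run at $\underline{y}^i_j$). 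With interiority in hand, complementary slackness kills all box multipliers, the two stationarity relations become exactly \eqref{eq:prosopt} and \eqref{eq:geneopt}, and we are done. Besides interiority, the only other point to watch is keeping the sign conventions consistent so that the single vector $\pmb{\lambda}$ simultaneously serves as the nodal price in the prosumers' and the generators' first-order conditions.
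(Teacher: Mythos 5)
Your proposal follows essentially the same route as the paper's proof: eliminate $\mathbf{h}$ via the nodal balance, use strict concavity of $\mathcal{W}_B$ plus compactness and Assumption~\ref{assum:exist} for existence and uniqueness of $(\mathbf{h}^*,(\mathbf{x}-\mathbf{d})^*,\mathbf{y}^*)$, then write the KKT system and show the box multipliers on \eqref{eq:directcons_p} vanish so that stationarity reduces exactly to \eqref{eq:prosopt} and \eqref{eq:geneopt}. The only difference is that you spell out the interiority argument (boundary blow-up and vanishing of the marginal utility and marginal cost, plus ruling out $\lambda^i\le 0$), whereas the paper simply asserts under Assumptions~\ref{assum:prosu} and~\ref{assum:gene} that the optimum avoids the box endpoints.
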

The above proposition states that solving the system operator's problem~\eqref{eq:so} leads to a competitive equilibrium. From the prospective of prosumer $j$ at node $i$, this means that given the wholesale market price~$\lambda^i$ (the optimal Lagrange multiplier of ~\eqref{eq:demsup} for the same node), the corresponding solution to her problem, which satisfies~\eqref{eq:prosopt}, is the same as the optimal decision made by the ISO via solving ~\eqref{eq:so}. This is also true for all other prosumers and generators. Having all market participants being satisfied with the the competitive equilibrium, it serves as a good benchamrk for market efficiency.


\section{PROPOSED EFFICIENT AGGREGATION MODEL}

The direct participation model introduced in the previous section is a benchmark: prosumers are allowed to participate directly and sell their production in the wholesale market. In reality, the supply capacities of prosumers are typically too small for consideration in the wholesale market. \footnote{California ISO requires a minimum of $0.5 {\sf MW}$ for a DER aggregator to participate \cite{CAISO}, and New York ISO requires $0.1 {\sf MW}$.} Also, computing the dispatch and settlement for a large number of prosumers raises an untenable computational burden on the system operator. The presence of DER aggregators brings benefits to the system, as they open the door for DER owners to participate and bring more flexibility to the grid. However, the profit-seeking nature of these aggregators can cause efficiency losses \cite{CISS,TPS}. To resolve this, we propose in this section an efficient aggregation model under two-part pricing. In this model, prosumers sell part of their DER supply productions to an aggregator~$\mathcal{A}$, based on the price offers made by $\mathcal{A}$. The interactions between~$\mathcal{A}$ and prosumers are modeled as a Stackelberg game \cite{basar}. The aggregator acts as a leader and announces a price pair~$(P^i_j,p^i_j)$ for each prosumer~$j$ at location~$i$. The prosumer follows by choosing the amount of energy to sell. If the prosumer decides to sell a nonzero fraction of her capacity to the aggregator, she pays to the aggregator a participation fee~$P^i_j$, and earns the price~$p^i_j$ for each unit of energy sold. The aggregator~$\mathcal{A}$ then sells all these procured capacity to the wholesale market at the wholesale price~$\lambda^i$. The goal of the aggregator $\mathcal{A}$ is to choose prices~$(P^i_j,p^i_j)$ that maximize her profit, while anticipating how DER owners would respond. Note that here, DER owners have access only to one aggregator, so $\Acal$ is in fact monopolistic, which further signifies the importance of our mechanism as it yields socially-optimal outcomes.  

We also note that differential pricing is allowed in this model, i.e., the price pair~$(P^i_j, p^i_j)$ can be set differently for different prosumers. While it is reasonable to have varying prices depending on locations~\cite{birge2017inverse}, there exist some debates on whether prices should be allowed to set differently for prosumers from the same location. The legal issue is not the main focus of this paper. Though we assumed differential pricing in the model, the equilibrium we obtain in the end has the same marginal price at each location, that is, $p^i_j = \lambda^i,\ \forall j\in [N_i]$. The participation fee can be differentiated by, for example, mailing different coupons to different prosumers to encourage their participations, which is arguably more justifiable.
In the remainder of this section, we show that this aggregation model achieves the same socially-optimal market outcomes as in the direct participation model. 
\subsection{Prosumer's Problem}
Consider prosumer~$j$ at location~$i$. Upon seeing the prices~$(P^i_j,p^i_j)$ announced by~$\mathcal{A}$, prosumer~$j$ decides if she would sell part of her capacity to~$\mathcal{A}$. If she chooses so, she would pay a fee~$P^i_j$ to~$\mathcal{A}$, and receives $p^i_j$ for each unit of energy sold. We may write prosumer~$j$'s payoff as
\begin{align}\label{eq:aggpros}
	&\pi_{j}^i(x^i_j, d^i_j):=\nonumber\\
	&\qquad \begin{cases}
		p^i_jx^i_j - P^i_j + u^i_j\left(d^i_j+{C}^i_j-x^i_j\right) - \lambda^i d^i_j, \text{ if } x^i_j>0,\\
		u^i_j\left(d^i_j + {C}^i_j\right) - \lambda^id^i_j,
		 \hfill \text{ if }x^i_j=0,
	\end{cases} 
\end{align}
Given $(P^i_j, p^i_j,\lambda^i)$, prosumer~$j$ solves:
\begin{align}
	\max_{x^i_j\in [0, C^i_j], d^i_j\in[0,Z-C^i_j+x^i_j]}  \ \pi_{j}^i(x^i_j, d^i_j),
\end{align}
where $d^i_j$ is the amount of energy prosumer~$j$ purchases at wholesale market price $\lambda^i$, \footnote{In real markets, end-use customers are represented in the wholesale market via utility companies. Here, for ease of exposition, we omit the analysis of the relationship between utility companies and end-use customers and assume that they represent the true preferences of their customers in the wholesale market without modifications. This allows us to focus on the strategic aggregation component of the problem. Nevertheless, our insights remain largely applicable if this assumption is relaxed.}  and $x^i_j$ is the amount of energy she sells to the aggregator. For buying~$d^i_j$ at the wholesale price, prosumer~$j$ is charged~$\lambda^id^i_j$. If the prosumer does not sell ($x^i_j = 0$), she has a total of $d^i_j + C^i_j$ to consume, and her utility from consumption would be~$u^i_j(d^i_j + C^i_j)$. If the prosumer chooses to sell $x^i_j>0$ to~$\mathcal{A}$, she is charged a participation fee~$P^i_j$, and receives a compensation~$p^i_jx^i_j$. The prosumer would have~$d^i_j + C^i_j - x^i_j$ to consume in this case and her utility from consumption would be $u^i_j(d^i_j + C^i_j - x^i_j)$. 

Let ${x^i_j}^*(P^i_j,p^i_j,\lambda^i)$ and ${d^i_j}^*(P^i_j,p^i_j,\lambda^i)$ denote the optimal response of prosumer~$j$ given aggregator's announced prices~$(P^i_j,p^i_j)$ and the wholesale market price~$\lambda^i$.\footnote{Sometimes, we drop arguments for simplicity.} Note that if $p^i_j > \lambda^i$, the prosumer can arbitrage by buying at the price~$\lambda^i$ and selling at a higher price~$p^i_j$. This will result in the prosumer earning infinite payoff and aggregator losing infinite profit, which would be avoided by the aggregator. Therefore, we may without loss of generality restrict our discussions to the case when $p^i_j\le \lambda^i, \forall i\in[n], j\in[N_i]$. In the case~$p^i_j = \lambda^i$, similar to the direct participation model, we may enforce that~$x^i_j$ and $d^i_j$ cannot both be nonzero. We then have the following lemma on the optimal response of prosumers.

\begin{lemma}\label{lem:prosopt}
	Consider an arbitrary prosumer~$j$ at location~$i$. Let $(z_1,z_2)$ be such that 
	\begin{align}
		\frac{\partial u^i_j(z)}{\partial z}\Big|_{z=z_1} = \lambda^i, \qquad \frac{\partial u^i_j(z)}{\partial z}\Big|_{z=z_2} = p^i_j.\label{eq:z}
	\end{align}
	Then, under Assumption~\ref{assum:prosu}, both $(z_1,z_2)$ exist and are unique. Furthermore, prosumer~$j$'s optimal response can be described as follows.
	\begin{itemize}
		\item If $C^i_j\le z_2$, then, we have  $${d^i_j}^* = \left[z_1 - C^i_j\right]^+, \qquad {x^i_j}^*= 0.$$
		\item If $C^i_j > z_2$, then, we have 
		$${x^i_j}^*= \left(C^i_j-z_2\right)\cdot \mathds{1}\left\{\Xcal\right\}, \qquad {d^i_j}^* = 0,$$
		where $$\Xcal:=\{P^i_j\le p^i_j\left(C^i_j-z_2\right)+ u^i_j\left(z_2\right) - u^i_j\left(C^i_j\right)\}.$$
	\end{itemize}
\end{lemma}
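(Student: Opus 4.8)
The plan is to exploit the piecewise structure of $\pi^i_j$ in~\eqref{eq:aggpros}: I would solve the prosumer's problem separately on the ``no-sell'' branch $\{x^i_j=0\}$ and on the ``sell'' branch $\{x^i_j>0\}$, compute the optimal value of each in closed form, and then compare them. For the existence and uniqueness of $z_1,z_2$ in~\eqref{eq:z}: by Assumption~\ref{assum:prosu} the map $z\mapsto\partial u^i_j(z)/\partial z$ is continuous, strictly decreasing, and sweeps from $+\infty$ (as $z\to 0$) down to $0$ (as $z\to Z$); since the prices of interest satisfy $0<p^i_j\le\lambda^i$, each equation in~\eqref{eq:z} has exactly one solution, and $p^i_j\le\lambda^i$ together with monotonicity of the derivative forces $z_1\le z_2$.

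On the no-sell branch the payoff collapses to the strictly concave program $\max_{d^i_j\in[0,Z-C^i_j]}\, u^i_j(d^i_j+C^i_j)-\lambda^i d^i_j$. Its derivative at $d^i_j=0$ has the same sign as $z_1-C^i_j$, so the maximizer is ${d^i_j}^*=[z_1-C^i_j]^+$; write $V_0:=u^i_j\!\left(\max\{z_1,C^i_j\}\right)-\lambda^i[z_1-C^i_j]^+$ for the resulting value, and note $V_0\ge u^i_j(C^i_j)$, with equality precisely when $z_1\le C^i_j$.

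On the sell branch I would first reduce to $d^i_j=0$. Given any feasible $(x^i_j,d^i_j)$ with $d^i_j>0$: if $x^i_j>d^i_j$, decreasing both $x^i_j$ and $d^i_j$ by $\delta:=d^i_j$ leaves the consumed amount $C^i_j+d^i_j-x^i_j$ unchanged and raises the payoff by $(\lambda^i-p^i_j)\delta\ge 0$; if $x^i_j\le d^i_j$, a direct computation gives $\pi^i_j(x^i_j,d^i_j)\le\pi^i_j(0,d^i_j-x^i_j)\le V_0$, so such points are irrelevant to the overall maximum (which is $\ge V_0$). Hence on the sell branch it suffices to study $\max_{x^i_j\in(0,C^i_j]}\, g(x^i_j):=p^i_j x^i_j-P^i_j+u^i_j(C^i_j-x^i_j)$, which is strictly concave in $x^i_j$ with $g'(0^+)=p^i_j-\partial u^i_j(C^i_j)/\partial z$, whose sign equals that of $C^i_j-z_2$.

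It remains to combine the two cases. If $C^i_j\le z_2$ then $g'(0^+)\le 0$, so $g$ is nonincreasing on $(0,C^i_j]$ and $\sup g=u^i_j(C^i_j)-P^i_j<u^i_j(C^i_j)\le V_0$; the no-sell branch strictly wins, so ${x^i_j}^*=0$ and ${d^i_j}^*=[z_1-C^i_j]^+$. If $C^i_j>z_2$ then $g'(0^+)>0$, and the first-order condition $\partial u^i_j(C^i_j-x^i_j)/\partial z=p^i_j$ gives the unique interior maximizer $\hat x=C^i_j-z_2$; since here $z_1\le z_2<C^i_j$, the competing no-sell value is $V_0=u^i_j(C^i_j)$, and selling is weakly preferable iff $g(\hat x)=p^i_j(C^i_j-z_2)-P^i_j+u^i_j(z_2)\ge u^i_j(C^i_j)$, which is exactly the event $\Xcal$; this yields ${x^i_j}^*=(C^i_j-z_2)\,\mathds{1}\{\Xcal\}$ and ${d^i_j}^*=0$ (either because $\neg\Xcal$ forces $d^i_j=[z_1-C^i_j]^+=0$, or because any optimal sell point has $d^i_j=0$ by the reduction above). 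The degenerate case $p^i_j=\lambda^i$ is handled identically, after noting that the restriction ``$x^i_j d^i_j=0$'' only discards payoff-redundant solutions. The step I expect to be the main obstacle is the bookkeeping on the sell branch: making the $d^i_j=0$ reduction airtight while respecting the open constraint $x^i_j>0$, ensuring the relevant suprema are attained where claimed, and fixing the tie-breaking on the boundary of $\Xcal$ consistently with the statement.
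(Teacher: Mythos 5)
Your proposal is correct and follows essentially the same route as the paper's proof: existence and uniqueness of $(z_1,z_2)$ via continuity and strict monotonicity of $\partial u^i_j/\partial z$, reduction to the case where $x^i_j$ and $d^i_j$ are not both positive via the same two exchange inequalities, a case split on $C^i_j$ versus $z_2$, first-order conditions on each branch, and a value comparison that produces the participation event $\Xcal$. Your write-up is somewhat more explicit about the branch values ($V_0$, $g'(0^+)$) and tie-breaking, but there is no substantive difference in approach.
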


The above Lemma states that if the capacity exceeds a certain value $z_2$ (note that $z_2$ is the value at which the marginal utility of consumption is equal to the aggregator's marginal price offer $p^i_j$)  and the upfront fee $P^i_j$ is not too high, then prosumers have an incentive to sell DER supply to the aggregator. If the capacity is too small or the upfront fee is too high, then prosumers would not sell DER supply and prefer to consume it locally.  
\subsection{Aggregator's Problem}
The DER aggregator~$\mathcal{A}$ collects power from prosumers and sells it in the wholesale market. By offering the prices~$(P^i_j,p^i_j)$ to each prosumer~$j$ at location~$i$, $\mathcal{A}$ procures a total capacity of $\sum_{j\in[N_i]}{x^i_j}^*(P^i_j,p^i_j)$ from location~$i$. $\mathcal{A}$ then sells it at the wholesale market price~$\lambda^i$. Given the wholesale market price~$\lambda^i$, the aggregator's profit from prosumer~$j$ at location~$i$ is
\begin{equation}
	\begin{aligned}
		&\Pi^i_j\left(P^i_j, p^i_j\right)
		:=\\&\qquad  P^i_j\mathds{1}\left\{{x^i_j}^*(P^i_j,p^i_j) > 0\right\}
		+ (\lambda^i-p^i_j){x^i_j}^*(P^i_j,p^i_j).
	\end{aligned} 
\end{equation}
Anticipating the response functions ${x^i_j}^*(P^i_j,p^i_j)$'s, she seeks to maximize her overall profit:
	\begin{align}\label{eq:aggobj}
		\max_{\bf P \ge 0, p \ge 0}   \underbrace{\sum_{i\in[n]}\sum_{j\in[N_i]} \Pi^i_j\left(P^i_j, p^i_j\right)}_{=:\hat{\Pi}(\bf P,p)},
	\end{align}
Aggregator $\mathcal{A}$'s profit is composed of two parts; the total participation fees charged to prosumers who sell positive amount of energy and the profits earned by reselling the procured energy in the wholesale market. We note that $\Acal$'s profit from prosumer~$j$ depends only on the response and the prices of prosumer~$j$, and there is no coupling among prosumers' problems. It should be clear that the Aggregator's problem~\eqref{eq:aggobj} can be decomposed to optimizing~$(P^i_j,p^i_j)$ for each prosumer at each location:
\begin{align}\label{eq:aggobj1}
	\max_{P^i_j\ge 0,p^i_j\ge 0}\Pi^i_j\left(P^i_j, p^i_j\right).
\end{align}
Solving~\eqref{eq:aggobj1} for each $i\in[n]$ and $j\in[N_i]$ would lead to the vectors $\bf (P^*,p^*)$,  which constitute an optimal solution for~\eqref{eq:aggobj}. We have the following result on the optimal decisions of the aggregator. 
\begin{lemma}\label{lem:aggopt}
	Under Assumption~\ref{assum:prosu}, consider an arbitrary prosumer~$j$ at location~$i$, with $z_1$ as in \eqref{eq:z}, and wholesale market price ~$\lambda^i$, When $z_1 < C^i_j$, the aggregator's optimal pricing decision is
	\begin{subequations}\label{eq:aggopt}
		\begin{align}
			{p^i_j}^* &= \lambda^i,\\
			{P^i_j}^* &= \lambda^i(C^i_j-z_1) + u^i_j(z_1) - u^i_j(C^i_j).
		\end{align}
	\end{subequations}
	When $z_1\ge C^i_j$, prosumer~$j$ will not sell DER supply to $\Acal$, i.e., $\Pi^i_j\left(P^i_j, p^i_j\right)=0$, for any $({P^i_j},{p^i_j})\in\Rset_+^2$.

\end{lemma}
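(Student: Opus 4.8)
The plan is to start from the aggregator's decomposed single-prosumer profit problem~\eqref{eq:aggobj1} and analyze it in two regimes determined by the sign of $C^i_j - z_1$. By Lemma~\ref{lem:prosopt}, the prosumer sells a positive amount to $\Acal$ only when $C^i_j > z_2$ and the participation-fee constraint $\Xcal$ holds; since $p^i_j \le \lambda^i$ (the standing restriction) and $u^i_j$ is strictly concave, \eqref{eq:z} gives $z_2 \ge z_1$, with equality iff $p^i_j = \lambda^i$. So when $z_1 \ge C^i_j$ we also have $z_2 \ge z_1 \ge C^i_j$, hence $x^{i*}_j = 0$ for every choice $(P^i_j,p^i_j)$, giving $\Pi^i_j = 0$ identically; this disposes of the second bullet immediately.

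For the main case $z_1 < C^i_j$, I would argue in two steps. First, fix $p^i_j \le \lambda^i$ and optimize over $P^i_j$. If $C^i_j \le z_2$ the prosumer never sells and profit is $0$, so assume $C^i_j > z_2$. Then $x^{i*}_j = C^i_j - z_2$ whenever $P^i_j \le \bar P(p^i_j) := p^i_j(C^i_j - z_2) + u^i_j(z_2) - u^i_j(C^i_j)$, and $x^{i*}_j = 0$ otherwise. Since $\Pi^i_j = P^i_j + (\lambda^i - p^i_j)(C^i_j - z_2)$ is increasing in $P^i_j$ on the selling region, the aggregator sets $P^i_j = \bar P(p^i_j)$, yielding
\begin{equation}\label{eq:profred}
	\Pi^i_j = (\lambda^i - p^i_j)(C^i_j - z_2) + p^i_j(C^i_j - z_2) + u^i_j(z_2) - u^i_j(C^i_j) = \lambda^i(C^i_j - z_2) + u^i_j(z_2) - u^i_j(C^i_j).
\end{equation}
The second step is to maximize the right-hand side of~\eqref{eq:profred} over the single remaining variable. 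Note $z_2 = z_2(p^i_j)$ is determined by $\partial u^i_j/\partial z|_{z_2} = p^i_j$; as $p^i_j$ ranges over $(0,\lambda^i]$, $z_2$ ranges over $[z_1, \cdot)$ by Assumption~\ref{assum:prosu} and strict concavity. So I reparametrize: define $\phi(t) := \lambda^i(C^i_j - t) + u^i_j(t) - u^i_j(C^i_j)$ for $t \in [z_1, C^i_j]$ and show it is maximized at $t = z_1$. Indeed $\phi'(t) = -\lambda^i + \partial u^i_j(t)/\partial z$, which vanishes exactly at $t = z_1$ (by the definition of $z_1$ in~\eqref{eq:z}), is positive for $t < z_1$ and negative for $t > z_1$ by strict concavity of $u^i_j$; hence the constrained maximum over $[z_1, C^i_j]$ is at $t = z_1$. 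Since $z_2(p^i_j) = z_1$ corresponds to $p^i_j = \lambda^i$, this gives ${p^i_j}^* = \lambda^i$ and, substituting $t = z_1$ into $\bar P$, ${P^i_j}^* = \lambda^i(C^i_j - z_1) + u^i_j(z_1) - u^i_j(C^i_j)$, exactly~\eqref{eq:aggopt}.

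The one point needing care — and the main obstacle — is the boundary/edge behavior of the inner problem: when $C^i_j \le z_2(p^i_j)$ the prosumer stops selling and the profit drops to $0$, so the effective feasible set for $t$ is genuinely $[z_1, C^i_j]$ rather than all of $[z_1,\infty)$, and one must confirm that $\phi(C^i_j) = 0$ is consistent with this (it is: at $t = C^i_j$, $x^{i*}_j = 0$). I would also need to note that at $p^i_j = \lambda^i$ the tie-breaking convention ($x^i_j, d^i_j$ not both nonzero) makes $x^{i*}_j = C^i_j - z_1 > 0$ well-defined and the constraint $\Xcal$ holds with equality at $P^i_j = {P^i_j}^*$, so the prosumer is (weakly) willing to participate — a standard Stackelberg tie-break in the leader's favor. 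Everything else is the routine first-order/concavity computation sketched above.
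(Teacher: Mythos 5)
Your proposal is correct and follows essentially the same route as the paper's proof: dispose of the $z_1\ge C^i_j$ case via Lemma~\ref{lem:prosopt}, make the participation constraint bind to eliminate $P^i_j$, and reduce the profit to the single-variable objective $\lambda^i(C^i_j-z_2)+u^i_j(z_2)-u^i_j(C^i_j)$ whose first-order condition forces $z_2^*=z_1$, i.e., ${p^i_j}^*=\lambda^i$. Your added care about the feasible range $t\in[z_1,C^i_j]$ and the sign of $\phi'$ is a slight tightening of the paper's bare first-order-condition argument, but not a different approach.
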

Lemma~\ref{lem:aggopt} provides an optimal solution~$({P^i_j}^*, {p^i_j}^*)$ to~\eqref{eq:aggobj1}, and the collection ~$\bf (P^*, p^*)$ form an optimal solution to~\eqref{eq:aggobj}. We also note that the optimal solution may not be unique in general: $\Acal$ may deviate from~\eqref{eq:aggopt} by further decreasing~${p^i_j}^*$ and increasing~${P^i_j}^*$ to earn the same profit, while keeping the response of the prosumer~$j$ being unchanged. This lemma states that there is an optimal pricing scheme which sets the marginal price~$p^i_j$ to be the wholesale market price~$\lambda^i$, and all prosumers at the same location will be offered the same~$p^i_j = \lambda^i$. The participation fees~$P^i_j$, however, will be charged differently for prosumers with different utility functions. A keen reader would observe that in view of Lemmas \ref{lem:prosdirect} and \ref{lem:prosopt}, the optimal DER supply to the wholesale market is the same with and without the aggregator $\Acal$, thus making this aggregation model economically efficient with socially optimal market outcomes.

\subsection{Aggregator-Prosumers Interaction as a Stackelberg Game}
Let $\mathcal{G}(\pmb{\lambda})$ denote the Stackelberg game among the aggregator and the prosumers for a given vector of wholesale market prices  $\pmb{\lambda}$. Aggregator~$\mathcal{A}$ acts as a Stackelberg leader and sets the prices $\bf (P,p)$. The prosumer follows by responding with ${x^i_j}^*(P^i_j,p^i_j,\lambda^i)$. We now define the equilibrium of the game.
\begin{definition}
	$\left(\bf P^*,p^*, {x}^*(P^*,p^*)\right)$ constitutes a Stackelberg equilibrium of the game~$\mathcal{G}(\pmb{\lambda})$ if
	\begin{itemize}
		\item Prosumers: For any $x^i_j\in\left[0,C^i_j\right]$, we have  $$\pi_{j}^i\left({x^i_j}^*({P^i_j}, {p^i_j}),{P^i_j}, {p^i_j},\lambda^i\right)\ge \pi_{j}^i\left({x^i_j}, {P^i_j}^*, {p^i_j}^*,\lambda^i\right)$$ for all $i\in[n]$ and $j\in[N_i]$.
		\item Aggregator: For all ${\bf P \ge 0, p \ge 0}$, we have $$\hat{\Pi}({\bf P^*,p^*, x^*(P^*,p^*)}, \pmb{\lambda} ) \ge\hat{\Pi}(\bf{ P,p, x^*(P,p)}, \pmb{\lambda} ), $$ 
	\end{itemize}
\end{definition}
We then have the following Stackelberg equilibrium for the game~$\Gcal(\pmb{\lambda})$, which follows directly from the prosumer's optimal response (Lemma~\ref{lem:prosopt}) and the aggregator's optimal pricing (Lemma~\ref{lem:aggopt}).

\begin{proposition}\label{prop:equi}
	Under Assumption~\ref{assum:prosu}, the game $\mathcal{G}(\pmb{\lambda})$ admits a Stackelberg equilibrium that satisfies 
	\begin{subequations}
		\begin{align}
			&{P^i_j}^* = \left[\lambda^i(C^i_j-z^i_j) + u^i_j\left(z^i_j\right) - u^i_j(C^i_j)\right]^+\label{eq:equiP}\\
			&{p^i_j}^* = \lambda^i\label{eq:equip}\\
			&{x^i_j}^* ({P^i_j}^*,{p^i_j}^* )= \left[C^i_j - z^i_j\right]^+\label{eq:equix}
		\end{align}
	\end{subequations}
	for each prosumer~$j$ at each location~$i$, where $z^i_j$ satisfies $\frac{\partial u^i_j(z)}{\partial z}\Big|_{z=z^i_j} = \lambda^i$, and ${d^i_j}^*= \left[z^i_j- C^i_j\right]^+$.
\end{proposition}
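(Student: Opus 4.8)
The plan is to obtain the claimed equilibrium by composing Lemma~\ref{lem:prosopt} (the prosumer's best response) with Lemma~\ref{lem:aggopt} (the aggregator's optimal pricing), after checking that the $[\cdot]^+$ operators in \eqref{eq:equiP}--\eqref{eq:equix} correctly glue together the two regimes $C^i_j>z^i_j$ and $C^i_j\le z^i_j$. Since $\hat{\Pi}$ is a sum of terms $\Pi^i_j(P^i_j,p^i_j)$ each depending only on that prosumer's own prices (the decomposition \eqref{eq:aggobj1}), it suffices to fix an arbitrary location $i$ and prosumer $j$, verify the prosumer and aggregator bullets of the equilibrium definition for that pair, and then sum over all $(i,j)$ to obtain the aggregator's condition for the full price vectors $({\bf P}^*,{\bf p}^*)$.

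First I would record the simplification that makes everything line up: at ${p^i_j}^*=\lambda^i$ the two thresholds in \eqref{eq:z} coincide, $z_1=z_2=z^i_j$, where $z^i_j$ is the unique solution of $\partial u^i_j(z)/\partial z\big|_{z=z^i_j}=\lambda^i$ (existence and uniqueness from Assumption~\ref{assum:prosu}). Introduce $g(C):=\lambda^i(C-z^i_j)+u^i_j(z^i_j)-u^i_j(C)$; then $g(z^i_j)=0$ and $g'(C)=\lambda^i-\partial u^i_j(C)/\partial C$, which is $\le 0$ for $C\le z^i_j$ and $>0$ for $C>z^i_j$ by strict concavity of $u^i_j$, so $g(C^i_j)$ has the same sign as $C^i_j-z^i_j$ and hence $[g(C^i_j)]^+=g(C^i_j)$ precisely when $C^i_j>z^i_j$. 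Now split on cases. If $C^i_j\le z^i_j$: Lemma~\ref{lem:aggopt} gives that the aggregator earns $0$ from this prosumer for every $(P^i_j,p^i_j)\in\Rset_+^2$, so the choice $({P^i_j}^*,{p^i_j}^*)=(0,\lambda^i)$ produced by \eqref{eq:equiP}--\eqref{eq:equip} is one of the optimal ones, and Lemma~\ref{lem:prosopt} evaluated there gives ${x^i_j}^*=0=[C^i_j-z^i_j]^+$, ${d^i_j}^*=[z^i_j-C^i_j]^+$, matching \eqref{eq:equix}. If $C^i_j>z^i_j$: then ${P^i_j}^*=g(C^i_j)=\lambda^i(C^i_j-z^i_j)+u^i_j(z^i_j)-u^i_j(C^i_j)$ with ${p^i_j}^*=\lambda^i$, which is exactly \eqref{eq:aggopt} of Lemma~\ref{lem:aggopt} at $z_1=z^i_j$, establishing the aggregator's bullet; and substituting $(P^i_j,p^i_j)=({P^i_j}^*,\lambda^i)$ into Lemma~\ref{lem:prosopt}, the event $\Xcal=\{P^i_j\le p^i_j(C^i_j-z_2)+u^i_j(z_2)-u^i_j(C^i_j)\}=\{{P^i_j}^*\le g(C^i_j)\}$ holds (with equality), so $\mathds{1}\{\Xcal\}=1$ and ${x^i_j}^*=C^i_j-z^i_j=[C^i_j-z^i_j]^+$, ${d^i_j}^*=0=[z^i_j-C^i_j]^+$, again matching \eqref{eq:equix}.

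Assembling: for each $(i,j)$ the prosumer's bullet is exactly Lemma~\ref{lem:prosopt} evaluated at the candidate prices, and the aggregator's bullet follows from the decomposition \eqref{eq:aggobj1} together with Lemma~\ref{lem:aggopt} applied prosumer-by-prosumer, so $({\bf P}^*,{\bf p}^*,{\bf x}^*({\bf P}^*,{\bf p}^*))$ with the entries in \eqref{eq:equiP}--\eqref{eq:equix} (and ${d^i_j}^*=[z^i_j-C^i_j]^+$) is a Stackelberg equilibrium of $\mathcal{G}(\pmb{\lambda})$. I do not expect a genuine obstacle here, since the statement is essentially the conjunction of the two preceding lemmas; the step needing the most care is the gluing at the boundary, namely verifying that $[g(C^i_j)]^+$ reconciles the two-case description in Lemma~\ref{lem:aggopt} (which needs the sign analysis of $g$), and concluding ${x^i_j}^*=C^i_j-z^i_j$ rather than $0$ when $\Xcal$ is tight, which relies on the weak inequality in the definition of $\Xcal$, i.e.\ on the convention in Lemma~\ref{lem:prosopt} that an indifferent prosumer is taken to sell.
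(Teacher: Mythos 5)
Your overall route is exactly the paper's: the paper offers no separate proof of Proposition~\ref{prop:equi} beyond the remark that it ``follows directly'' from Lemmas~\ref{lem:prosopt} and~\ref{lem:aggopt}, and your prosumer-by-prosumer composition of those two lemmas (using the decomposition~\eqref{eq:aggobj1} and the tie-breaking convention in $\Xcal$) is the intended argument, correctly carried out in the case $C^i_j>z^i_j$. One intermediate claim in your sign analysis is false, however: from $g(z^i_j)=0$, $g'\le 0$ on $[0,z^i_j]$ and $g'>0$ beyond, you conclude that $g(C^i_j)$ has the same sign as $C^i_j-z^i_j$, but a function that decreases to $0$ and then increases is nonnegative everywhere; indeed $g(C)=u'(z^i_j)(C-z^i_j)+u^i_j(z^i_j)-u^i_j(C)\ge 0$ for all $C$ is just the tangent-line inequality for the concave $u^i_j$. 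Consequently, when $C^i_j<z^i_j$ the formula~\eqref{eq:equiP} returns the strictly positive fee $g(C^i_j)$, not $0$ as you assert. This does not damage the proof: in that regime Lemma~\ref{lem:prosopt} gives ${x^i_j}^*=0$ for every $P^i_j\ge 0$ and Lemma~\ref{lem:aggopt} gives zero aggregator profit for every price pair, so both bullets of the equilibrium definition hold whatever value the fee takes; but the sentence identifying $({P^i_j}^*,{p^i_j}^*)=(0,\lambda^i)$ as the output of~\eqref{eq:equiP}--\eqref{eq:equip} should be corrected.
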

We note that the game~$\Gcal(\pmb{\lambda})$ may admit other Stackelberg equilibria, but Proposition~\ref{prop:equi} provides the most economically efficient one. In case of non-uniqueness, this economically efficient equilibrium corresponds to the case where prosumers slightly prefer participation, i.e., if selling~$x^i_j$ amount of energy earns the prosumer~$j$ the same~$\pi^i_j$ as not selling, then she chooses to sell this~$x^i_j$. \footnote{Alternatively, one can impose a slight perturbation of ${P^i_j}^*$ to ${P^i_j}^*-\epsilon$, where $\epsilon>0$, to ensure maximum DER supply.} The above equilibrium is quite intuitive, $\Acal$ passes the location marginal price $\lambda^i$ obtained from wholesale market outcomes as is to prosumers, so $\Acal$ has no marginal profits from ${\bf p}^*$. Instead, $\Acal$ makes all of the profits from the upfront participation fees ${\bf P^*}$.  

\subsection{Generator's Problem}
For a given wholesale market price~$\lambda^i$, the conventional generators solve the same problem as described in Section~\ref{sec:benchgene}, and the result of Lemma~\ref{lem:geneopt} still applies under the current aggregation model.

\subsection{The Economic Dispatch Problem}
The system operator solves an optimization problem similar to that in the direct participation model as described in Section~\ref{sec:benchiso}. The network constraints~\eqref{eq:directcons} remain valid.
Besides, the ISO also considers participant-specific constraints:
\begin{equation}\label{eq:aggcons_p}
	{\bf 0\le x \le C, \quad 0\le d\le Z-C+x,\quad  \underline{y} \le y  \le \overline{y} } 	
\end{equation}
The objective of the system operator is to maximize the social welfare, which now includes the prosumer surplus~({\sf PS}), aggregator surplus~({\sf AS}), generator surplus~({\sf GS}), and merchandizing surplus ({\sf MS}):
\begin{subequations}
	\begin{align}
		{\sf PS} &:= \sum_{i\in[n]}\sum_{j\in [N_i]}\Bigg(u^i_j(d^i_j -x^i_j + {C}^i_j) - \lambda^i d^i_j \nonumber \\
		&\qquad\qquad\qquad\qquad +p^i_jx^i_j -  P^i_j\mathds{1}\left\{x^i_j>0\right\}\Bigg)\label{aggcs}\\
		{\sf AS} &:= \sum_{i\in[n]}\sum_{j\in[N_i]}\Bigg(P^i_j\mathds{1}\left\{x^i_j>0\right\}+\lambda^ix^i_j -p^i_jx^i_j  \Bigg)\\
		{\sf GS} &:= \sum_{i\in[n]}\sum_{j\in [G_i]}\left(\lambda^iy^i_j - c^i_j(y^i_j)\right), {\sf MS} := \sum_{i\in[n]}\lambda^ih^i.\label{aggms}
	\end{align}
\end{subequations}
The social welfare is the sum of the above four terms. By the supply-demand balance ${\bf h = D - Y - X}$, and after canceling terms, we write the social welfare as
\begin{align}
	\Wcal_A := \sum_{i\in[n]} \left(\sum_{j\in[N_i]}u^i_j(d^i_j +{C}^i_j-x^i_j) -  \sum_{j\in [G_i]}c^i_j(y^i_j)\right),
\end{align}
which is the same as~$\Wcal_B$ in~\eqref{eq:WB}.
The system operator's economic dispatch problem is then:
\begin{equation}\label{eq:soa}
	\begin{aligned}
		&\max \qquad \mathcal{W}_A ({\bf h, x, d, y})\\
		&  \text{subject to}\qquad \eqref{eq:directcons},\ \eqref{eq:aggcons_p}.
	\end{aligned}
\end{equation}
\begin{assumption}\label{assum:exist1}
	The system operator's economic dispatch problem~\eqref{eq:soa} is feasible.
\end{assumption}
As the system operator solves~\eqref{eq:soa}, the wholesale market prices~$\pmb{\lambda}$ are given by the optimal Lagrange multiplier of the constraint~\eqref{eq:demsup}. We then have the following proposition.
\begin{proposition}[Competitive Equilibrium]\label{prop:equi1}
	Under Assumptions~\ref{assum:prosu},~\ref{assum:gene},~\ref{assum:exist1}, there exists an optimal solution $\({\bf h^*, x^*, d^*, y^*}\)$ to~\eqref{eq:soa}. Let ${\pmb{\lambda}}$ denote the corresponding optimal Lagrange multipliers of constraints~\eqref{eq:demsup}. Then, the following statements are true: 
		\begin{itemize}
		\item $(\bf x^*,\bf {d}^*)$ are consistent with Lemma~\ref{lem:prosopt}, given $\bf(P^*,p^*)$ and $\pmb{\lambda}$. \vspace{0.05in}
		\item ${\bf(P^*,p^*)}$ are consistent with Lemma~\ref{lem:aggopt}, given ${\bf x^*}$ and $\pmb{\lambda}$.
		\item ${\bf y}^*$ satisfies~\eqref{eq:geneopt}, given $\pmb{\lambda}$.
		\vspace{0.05in}
	\end{itemize}
\end{proposition}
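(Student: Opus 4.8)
The plan is to reduce Proposition~\ref{prop:equi1} to Proposition~\ref{prop:compeq}: after the change of variables $w^i_j := x^i_j - d^i_j$, the economic dispatch problem~\eqref{eq:soa} coincides with the benchmark dispatch~\eqref{eq:so}, so the existence and competitive‑equilibrium structure of its optimum transfer verbatim; the three bullet points then follow by matching the resulting primal–dual optimum against the prosumer and aggregator optimal‑response characterizations in Lemmas~\ref{lem:prosopt} and~\ref{lem:aggopt}.

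First, the reduction. As already noted, $\Wcal_A$ equals $\Wcal_B$ of~\eqref{eq:WB}, and both this objective and every constraint in~\eqref{eq:directcons} depend on $({\bf x},{\bf d})$ only through $\bf w$. Moreover the image of the prosumer box $\{0\le x^i_j\le C^i_j,\ 0\le d^i_j\le Z-C^i_j+x^i_j\}$ under $(x^i_j,d^i_j)\mapsto w^i_j$ is exactly the interval $\{C^i_j - Z\le w^i_j\le C^i_j\}$ of~\eqref{eq:directcons_p}, and restricting (without loss of generality, as in the benchmark) to representatives with at most one of $x^i_j,d^i_j$ nonzero makes this a bijection. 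Hence $({\bf h^*},{\bf x^*},{\bf d^*},{\bf y^*})$ is optimal for~\eqref{eq:soa} iff $({\bf h^*},({\bf x}-{\bf d})^*,{\bf y^*})$ is optimal for~\eqref{eq:so}, with the same optimal value, and since~\eqref{eq:demsup} is literally unchanged in the variables $({\bf h},{\bf x}-{\bf d},{\bf y})$, its multiplier vector $\pmb\lambda$ is the same. Proposition~\ref{prop:compeq} (Assumptions~\ref{assum:prosu},~\ref{assum:gene},~\ref{assum:exist1} imply Assumptions~\ref{assum:prosu}--\ref{assum:exist} for~\eqref{eq:so}, the feasible sets being in bijection) then yields existence of such an optimum, with $\pmb\lambda$ satisfying~\eqref{eq:prosopt} for $({\bf x}-{\bf d})^*$ and~\eqref{eq:geneopt} for $\bf y^*$. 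The latter is exactly the third bullet.

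Next, the prosumer bullet. Let $z^i_j$ solve $\frac{\partial u^i_j(z)}{\partial z}\big|_{z=z^i_j}=\lambda^i$ (unique by Assumption~\ref{assum:prosu}); then~\eqref{eq:prosopt} gives $C^i_j - (x^i_j-d^i_j)^* = z^i_j$, hence ${x^i_j}^* = [C^i_j - z^i_j]^+$ and ${d^i_j}^* = [z^i_j - C^i_j]^+$. Apply Lemma~\ref{lem:prosopt} with the equilibrium prices $({P^i_j}^*,{p^i_j}^*)$ of~\eqref{eq:aggopt}: since ${p^i_j}^* = \lambda^i$, the thresholds $z_1,z_2$ of~\eqref{eq:z} both equal $z^i_j$. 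If $C^i_j\le z^i_j$, Lemma~\ref{lem:prosopt} returns ${d^i_j}^* = [z^i_j - C^i_j]^+$, ${x^i_j}^* = 0$, matching the above. If $C^i_j > z^i_j$, it returns ${x^i_j}^* = (C^i_j - z^i_j)\mathds{1}\{\Xcal\}$, ${d^i_j}^* = 0$, with $\Xcal = \{{P^i_j}^* \le \lambda^i(C^i_j - z^i_j) + u^i_j(z^i_j) - u^i_j(C^i_j)\}$; concavity of $u^i_j$ makes the right‑hand side nonnegative, so the formula for ${P^i_j}^*$ in~\eqref{eq:aggopt} (whose $[\cdot]^+$ is thus vacuous here) attains it with equality and, under the efficient tie‑breaking convention of Proposition~\ref{prop:equi}, $\mathds{1}\{\Xcal\} = 1$, again giving ${x^i_j}^* = C^i_j - z^i_j$, ${d^i_j}^* = 0$. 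This is the first bullet.

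Finally, the aggregator bullet is immediate from Lemma~\ref{lem:aggopt} with $z_1 = z^i_j$: when $z^i_j < C^i_j$ it prescribes exactly ${p^i_j}^* = \lambda^i$ and ${P^i_j}^* = \lambda^i(C^i_j - z^i_j) + u^i_j(z^i_j) - u^i_j(C^i_j)$, the values used above; when $z^i_j \ge C^i_j$ we have ${x^i_j}^* = 0$ and the lemma states the prosumer sells nothing at any prices, so $({P^i_j}^*,{p^i_j}^*)$ is trivially consistent. The only genuinely substantive step is the reduction in the second paragraph --- verifying that the box in~\eqref{eq:aggcons_p} surjects onto the interval in~\eqref{eq:directcons_p} and that~\eqref{eq:demsup} together with its multiplier are preserved, so that Proposition~\ref{prop:compeq} transfers intact; everything afterward is bookkeeping, the only care needed being the concavity inequality that makes ${P^i_j}^*$ nonnegative and the tie‑break that activates $\mathds{1}\{\Xcal\}$.
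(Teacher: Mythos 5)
Your proposal is correct and follows essentially the same route as the paper's proof: establish that \eqref{eq:soa} and \eqref{eq:so} are equivalent under the change of variables $w^i_j = x^i_j - d^i_j$, transfer the optimum and the multipliers $\pmb{\lambda}$ via Proposition~\ref{prop:compeq}, and then verify the three bullets against Lemmas~\ref{lem:prosopt} and~\ref{lem:aggopt}. The only cosmetic difference is that the paper establishes the equivalence by a one-directional inclusion of feasible sets together with the explicit construction ${\bf x}^* = \left[({\bf x}-{\bf d})^*\right]^+$, ${\bf d}^* = \left[-({\bf x}-{\bf d})^*\right]^+$, rather than your bijection-on-representatives argument.
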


We now present the following theorem, which states that our proposed aggregation model achieves the same market efficiency as the benchmark direct participation model.
\begin{theorem}[Main Result]\label{thm:eff}
	Let $\Wcal_A^*$ be the optimal social welfare of~\eqref{eq:soa}, and let $\Wcal_B^*$ be the optimal social welfare of~\eqref{eq:so}. Then, we have that
	\begin{align}
		\Wcal_A^* = \Wcal_B^*.
	\end{align}
	Further, we have that the optimal ${\bf x^*,d^*,y^*}$ (from Proposition~\ref{prop:equi1}) solving~\eqref{eq:soa} are the same as those solving~\eqref{eq:so}.
\end{theorem}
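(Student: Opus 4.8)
The plan is to show that the two economic dispatch problems \eqref{eq:so} and \eqref{eq:soa} are equivalent after a change of variables, and hence share the same optimal value and the same optimal $({\bf x},{\bf d},{\bf y})$. The key observation is that in the benchmark problem \eqref{eq:so} the prosumer's net sale is the single variable $x^i_j - d^i_j$ constrained by $C^i_j - Z \le x^i_j - d^i_j \le C^i_j$, whereas in the aggregation problem \eqref{eq:soa} we carry $x^i_j \in [0, C^i_j]$ and $d^i_j \in [0, Z - C^i_j + x^i_j]$ separately. First I would verify that both objective functions are literally the same function $\mathcal{W}_B = \mathcal{W}_A = \sum_i\big(\sum_j u^i_j(d^i_j + C^i_j - x^i_j) - \sum_j c^i_j(y^i_j)\big)$, which the excerpt already establishes by cancellation, and that the generator blocks and the network constraints \eqref{eq:directcons} are identical in both. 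So the only thing to check is that the feasible sets, projected onto the quantity that actually enters the objective and the balance constraint \eqref{eq:demsup}, namely the net injection $x^i_j - d^i_j$ (and, through $u^i_j$, the consumption $d^i_j + C^i_j - x^i_j$), coincide.

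Next I would make the substitution $t^i_j := x^i_j - d^i_j$. Under Assumption~\ref{assum:prosu} we have $u^i_j$ defined on $[0,Z]$, so feasibility forces $d^i_j + C^i_j - x^i_j \in [0,Z]$, i.e. $C^i_j - Z \le t^i_j \le C^i_j$, which is exactly the benchmark constraint \eqref{eq:directcons_p}. Conversely, given any $t^i_j \in [C^i_j - Z, C^i_j]$ feasible for \eqref{eq:so}, I set $x^i_j = [t^i_j]^+$ and $d^i_j = [-t^i_j]^+$; one checks directly that $x^i_j \in [0, C^i_j]$, $d^i_j \ge 0$, and $d^i_j \le Z - C^i_j + x^i_j$ (since $d^i_j - x^i_j = -t^i_j \le Z - C^i_j$), so this is feasible for \eqref{eq:soa}, with the same ${\bf h}$ and ${\bf y}$. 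Because ${\bf h}$ enters \eqref{eq:demsup} only through $X^i - D^i = \sum_j t^i_j$, and the objective depends on the prosumer variables only through $t^i_j$, the two problems have identical feasible images in $({\bf h}, {\bf t}, {\bf y})$ and identical objectives, hence $\mathcal{W}_A^* = \mathcal{W}_B^*$ and the optimal $({\bf x} - {\bf d}, {\bf y})$ — equivalently, by the earlier convention that at most one of $x^i_j, d^i_j$ is nonzero, the optimal $({\bf x}, {\bf d}, {\bf y})$ — agree. Uniqueness of the optimizer, guaranteed by Proposition~\ref{prop:compeq} (strict concavity/convexity of $u^i_j, c^i_j$), then pins down that these are literally the same point.

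I would close by reconciling this with the game-theoretic picture: Proposition~\ref{prop:equi1} says the dispatch solution of \eqref{eq:soa} together with the induced $\pmb{\lambda}$ is consistent with the prosumers' best responses (Lemma~\ref{lem:prosopt}) and the aggregator's optimal pricing (Lemma~\ref{lem:aggopt}) — with $z^i_j = z_1$ in the notation there — so the Stackelberg equilibrium of $\mathcal{G}(\pmb{\lambda})$ reproduces exactly the dispatch that the ISO computes, and by the equivalence above this is the same ${\bf x}^*, {\bf d}^*, {\bf y}^*$ as the competitive equilibrium of the benchmark in Proposition~\ref{prop:compeq}. The main obstacle, such as it is, is bookkeeping: making sure the bound $d^i_j \le Z - C^i_j + x^i_j$ in \eqref{eq:aggcons_p} is exactly what the domain restriction on $u^i_j$ forces and nothing more, so that the change of variables is a genuine bijection between feasible sets rather than merely an inclusion; the $[\cdot]^+$ splitting handles this, but one must be careful that the "at most one nonzero" normalization used in both models is what makes the inverse map well defined.
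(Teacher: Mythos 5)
Your proposal is correct and follows essentially the same route as the paper: the paper's proof (folded into the proof of Proposition~\ref{prop:equi1}) likewise notes that $\mathcal{W}_A$ and $\mathcal{W}_B$ are the same function, shows the feasible region of~\eqref{eq:soa} maps into that of~\eqref{eq:so} (giving $\mathcal{W}_B^*\ge\mathcal{W}_A^*$), and then lifts the unique optimizer of~\eqref{eq:so} via the same splitting ${\bf x}^*=\left[({\bf x}-{\bf d})^*\right]^+$, ${\bf d}^*=\left[-({\bf x}-{\bf d})^*\right]^+$ to a feasible point of~\eqref{eq:soa} achieving the same value. Your additional framing of this as a bijection of the feasible sets projected onto ${\bf t}={\bf x}-{\bf d}$, and the explicit appeal to uniqueness from Proposition~\ref{prop:compeq} to identify the optimizers, are just slightly more careful statements of the same argument.
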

In summary, under the proposed aggregation model, the aggregator procures energy from prosumers using two-part pricing, the aggregator would optimally pay the wholesale market price to the prosumers for each unit of energy procured, while the participation fee is differently charged to each prosumer as the additional consumer surplus when she sells those energy compared with not selling (which is dependent on her utility of consumption). Theorem~\ref{thm:eff}, together with Proposition~\ref{prop:equi1}, implies that under the aggregator's two-part differential pricing scheme, the prosumers' optimal buying and selling behavior is exactly the same as those in the direct participation model. As a result, the social welfare achieved under the aggregation model matches that of the direct participation model, i.e., there is no loss of market efficiency from the aggregation. 

\begin{remark}
	The significance of Theorem \ref{thm:eff} follows from the fact that via our proposed aggregation model,  DER aggregation through a profit-seeking aggregator $\Acal$ is equivalent to solving a socially-optimal economic dispatch model where $\Acal$ is absent. Hence, the potential efficiency loss due to presence of a monopolistic profit-seeking aggregator $\Acal$ is off-set by two-part pricing. This is not possible via one-part pricing, as demonstrated in \cite{CISS,TPS} and in Section IV, where efficiency loss arises from the profit-seeking behavior of $\Acal$.
\end{remark}

%
%

\section{ILLUSTRATIVE EXAMPLES}
\begin{figure*}
	\centering
	\includegraphics[width=.8\paperwidth]{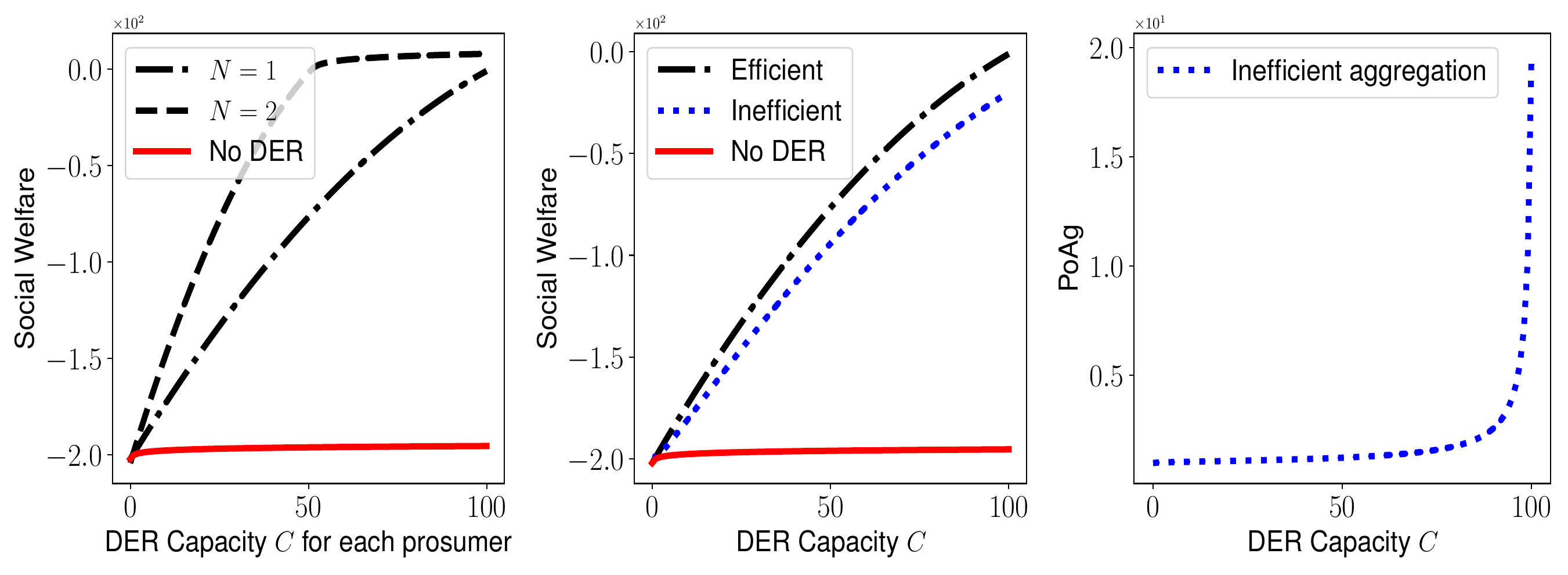}
	\caption{{\bf Left:} Efficient aggregation vs. no DER integration. Adding more prosumers attains a higher social welfare. {\bf Middle:} Comparison between the two extremes (efficient aggregation vs. no DER) and the one-part pricing model (inefficient). {\bf Right:} Quantification of the efficiency loss for the one-part pricing model, via the metric {\sf PoAg}.}
	\label{numerical}
\end{figure*}
In this section, we illustrate our results via stylized examples, with the goal of revealing some key insights. For ease of exposition, we consider a power system with 1 node, and hence, there are no network constraints. We remark that however, our theoretical results still hold for any power network. We also assume that there is a fixed demand $\bar{D}$, and thus~\eqref{eq:demsup} becomes
\begin{align}
	{ D + \bar{D} - Y - X = 0}.
\end{align}
We also consider one conventional power generator with cost $$c(y)=\alpha y^2 + \beta y.$$ 
  With one prosumer, consider the isoelastic utility function $u$, with risk-aversion parameter $\eta$  \cite{isoelastic}: 
\begin{align}\label{eq:isoelastic}
	u(z) = \begin{cases}
		\frac{z^{1-\eta}-1}{1-\eta} & \eta\ge 0, \eta \ne 1,\\
		\ln(z) & \eta = 1.
	\end{cases}
\end{align}

 When $\eta=0$, the prosumer is risk-neutral, for $\eta>0$, the prosumer is risk-averse, and increasing $\eta$ implies more risk-aversion.  For $C>p^{-1/\eta}$, from Proposition \ref{prop:equi}, the Stackelberg equilirium prices of the game $\Gcal({\lambda})$ are  $${p}^* = \lambda, \quad P^*= \left[\lambda\left(C-\lambda^{-1/\eta}\right) + u\left(\lambda^{-1/\eta}\right) - u(C)\right]^+,$$ 
and the equilibrium response is  $${x}^* ({P}^*,{p}^* )= C-\lambda^{-1/\eta}.$$ We note that by Lemma~\ref{lem:prosopt}, the case when $C\leq \lambda^{-1/\eta}$ corresponds to $x^* =0$  and $d^* =  \lambda^{-1/\eta} - C$. The ISO's problem for this single-prosumer case is then:
\begin{equation}\label{eq:exso}
	\begin{aligned}
		&\max \qquad \mathcal{W}_A ({ x, d, y}) := u(C-x+d) - c(y)\\
		&  \text{subject to}\qquad \bar{D}+d-x-y = 0,\quad 0\le y \le \bar{y},\\
		&\qquad\qquad\qquad 0\le x\le C,\quad 0\le d\le Z-C+x.
	\end{aligned}
\end{equation}
We compare the optimal value of~\eqref{eq:exso}, i.e., the optimal social welfare, with the following alternative models.

\noindent 1) \emph{No DER participation model}

	In this model, the prosumer is restricted to not selling her energy, i.e., $x=0$. 
	The ISO's problem is the same as~\eqref{eq:exso} with the additional constraint that $x=0$, i.e.,
	\begin{equation}\label{eq:exson}
		\begin{aligned}
			&\max \qquad \mathcal{W}_N ({d, y}) := u(C+d) - c(y)\\
			&  \text{subject to}\qquad \bar{D}+d-y = 0,\quad 0\le y \le \bar{y},\\
			&\qquad\qquad\qquad 0\le d\le Z-C.
		\end{aligned}
	\end{equation}

\noindent 2) \emph{One-part pricing model ($P=0$)}
	
	In this model, the prosumer may sell her energy to the aggregator~$\Acal$ at a fixed marginal price~$p$ set by~$\Acal$, with no participation fees. 
	In this model, the prosumer solves 
	\begin{equation}
		\begin{aligned}
			&\max_{x,d} \pi(x,d) = u(C+d-x) - \lambda d+px\\
			&\text{s.t. }\quad 0\le x\le C,\quad 0\le d\le Z-C+x.
		\end{aligned}
	\end{equation}
	With the isoelastic utility function~\eqref{eq:isoelastic}, an optimal response of the prosumer is $$d^* = \left[\lambda^{-1/\eta}-C\right]^+ \quad \text{and} \qquad x^* = \left[C-p^{-1/\eta}\right]^+.$$ Note that if $C\le \lambda^{-1/\eta}$, then $d^*\ge 0$ and $x^* =0$, which leads to zero profit of the aggregator for any $0\le p\le \lambda$, leading to no DER dispatch at the wholesale market level. If $C>\lambda^{-1/\eta}$, then the aggregator would choose a $p$ such that $C>p^{-1/\eta}$, and she solves	
		$$\max_{0\le p\le \lambda} \Pi=(\lambda - p)x^*(p) = (\lambda - p)\left(C-p^{-1/\eta}\right).$$
	  In this case, the Stackelberg equilibrium of the aggregator-prosumer game $\hat{\Gcal}({\lambda})$ (this is a different game from  $\Gcal({\lambda})$, but similarly defined) is given by $(x^*(p^*), p^*)$ that satisfy
	 $$(1-\eta)p^* + \eta C{p^*}^{1+1/\eta}=\lambda, \quad x^*(p^*) = C-p^{-1/\eta}.$$ 
	 If we let $\eta=1$ (logarithmic utility), we then have 
	 $$p^* (\lambda)= \sqrt{\frac{\lambda}{C}}, \quad x^*(p^*) = C-p^{-1}.$$ 	 
	 Now, the next question is, how can we define the corresponding social welfare. Note that here, the Stackelberg equilibrium $(x^*(p^*), p^*)$  does not yield an efficient wholesale market outcome, so instead of using prosumer's utility in ISO's economic dispatch problem, we need to construct an induced function. First, we note that $$x^*(p^*(\lambda) )= C-\sqrt{\frac{C}{\lambda}},$$ and hence, the corresponding inverse supply function for the prosumer is $$p_A(x) = \frac{C}{(C-x)^2}.$$

	
	Now, the system operator solves the economic dispatch problem given by
	\begin{equation}\label{eq:exsoo}
		\begin{aligned}
			&\max\qquad \Wcal_O(x,y):= -c(y) - \int_{0}^{x} p_A(\hat{x})d\hat{x}\\
			&  \text{subject to}\qquad \bar{D}-x-y = 0, \quad 0 \le y\le \bar{y},\\
			&\qquad\qquad\qquad 0\le x\le C.
		\end{aligned}
	\end{equation}
Since there is no elastic demand $d$, the above problem is equivalent to the following cost minimization problem
	\begin{equation}\label{eq:exsoo}
		\begin{aligned}
			&\min\qquad \Ccal_O(x,y):= c(y) + \frac{x}{C-x}\\
			&  \text{subject to}\qquad \bar{D}-x-y = 0, \quad 0 \le y\le \bar{y},\\
			&\qquad\qquad\qquad 0\le x\le C.
		\end{aligned}
	\end{equation}
Anaolgously, we can define the cost $\Ccal_A:=-\Wcal_A$ for the efficient dispatch model. To capture the efficiency loss for this model, we adopt the Price of Aggregation ({\sf PoAg}) metric introduced in \cite{CISS,TPS}, which is defined as $$ \text{\sf PoAg}:= \frac{\Ccal^*_O}{\Ccal^*_A}.$$ One can look at {\sf PoAg} as similar to the Price-of-Anarchy in noncooperative games, but {\sf PoAg} naturally captures the hierarchical nature of electricity markets. By definition, $\text{\sf PoAg}\geq1$, and when it is equal to $1$, it means that the aggregation model yields a socially-optimal market outcome. Note that {\sf PoAg} can be analogously defined for social welfare maximization, to compare any inefficient model to  $\Wcal_A^*$.

Figure \ref{numerical} illustrates the market outcomes for the above models. In our simulations, we vary the capacity $C$ from 0 to $\bar{D}=100$. One can think of this as a proxy for renewable energy integration, $C=0$ implies \%0 renewable integration, and $C=\bar{D}$ implies \%100 of the total inflexible load might be met exclusively with DER capacity. We also assume the following parameters $$\eta=1,\  \alpha=0.01, \  \beta=1, \ Z=1000,\  \bar{y}=1000.$$ The above parameters are picked such that it is more expensive to use the conventional generator than DERs, and conventional generators can fully meet the total system's demand. DER aggregation is expected to improve the social welfare, and this is demonstrated in Figure \ref{numerical} ({\bf Left}). Since our model is efficient, the improvements shown are the maximum possible ones. We then increase the number of prosumers to two, having the same capacity and $\eta$, and observe that the efficient welfare further improves. This is natural: the more DER capacities available, the more cheaper resources are available to the ISO. Next, we fix the number of prosumers to one, and study other aspects. The one-part pricing model discussed above is also expected to improve the social welfare, compared to no DER participation, but remains inefficient, as Figure \ref{numerical} ({\bf Middle}) shows. We quantify the efficiency loss via the {\sf PoAg} metric and plot it in Figure \ref{numerical} ({\bf Right}). Here, {\sf PoAg} increases nonlinearly as $C$ increases. 

\section{CONCLUSIONS AND FUTURE WORKS}

To include DER capacities in the wholesale electricity market, we proposed an efficient DER aggregation model. The aggregator announces a two-part price offer to each prosumer (DER owner): a participation fee and a per-unit price paid to the prosumer for the energy procured. This interaction is modeled as a Stackelberg game, which admits an efficient equilibrium. Under this equilibrium, the per-unit price is equal to the wholesale market price, and prosumers choose to sell the same amount of energy as if they were participating directly in the wholesale market (ideal case). As a result, the social welfare achieved under this aggregation model is the same as the social welfare achieved when prosumers can sell directly, i.e., there is no efficiency loss even with a monopolistic aggregator. The two-part pricing mechanism is essential in achieving this efficiency result. As illustrated via an example, there may be efficiency loss if no participation fees can be charged. Under the current model, the aggregator charges different participation fees to the prosumers. There may be some debates on whether differential pricing on the participation fee should be allowed. If the aggregator has to choose a single participation fee for all prosumers at the same location, we would no longer have the efficiency result, and characterizing the efficiency loss in this aggregation process would be a promising direction to work on. Another direction for future work is to consider stochastic DER capacities under two-part pricing, and see how the stochastic nature of the DER supply~\cite{sunar2019strategic} would affect the aggregator-prosumer game and the overall market efficiency.

\bibliographystyle{IEEEtran}
\bibliography{references}

\section*{APPENDIX}

\begin{proof}[Proof of Proposition~\ref{prop:compeq}]
	First note that~\eqref{eq:so} can be equivalently written as:
	\begin{equation}\label{eq:soeq}
		\begin{aligned}
			&\max \qquad \mathcal{W}_B ({\bf x-d, y})\\
			&  \text{subject to}\ \ {\bf 1}^\top \left({\bf X-D+Y}\right)=\bf  0, \ {\bf -B(X-D+Y)}\le {\bf f},\\
			& \qquad\qquad \ \ {\bf C-Z\le x-d \le C},  \  {\bf \underline{y} \le y  \le \overline{y}. } 
		\end{aligned}
	\end{equation}
	By Assumption~\ref{assum:prosu} and Assumption~\ref{assum:gene}, each $u^i_j$ is strictly concave, and each $c^i_j$ is strictly convex. Thus, $\Wcal_B$ given by~\eqref{eq:WB} is strictly concave. It can be seen from~\eqref{eq:soeq} that the feasible set is compact, and is nonempty by Assumption~\ref{assum:exist}. Therefore, there exists a unique optimal solution $\({\bf (x-d)^*, y^*}\)$ to~\eqref{eq:soeq}. By letting ${\bf h^*} = {\bf -Y^* - (X-D)^*}$, we have a unique optimal solution $\({\bf h^*, (x-d)^*, y^*}\)$ to~\eqref{eq:so}.
	
	We write the Lagrangian of~\eqref{eq:so} as
		\begin{align}
			\Lcal =& \sum_{i\in[n]} \Bigg(\sum_{j\in[N_i]}u^i_j\left({C}^i_j-(x^i_j-d^i_j)\right) -  \sum_{j\in [G_i]}c^i_j(y^i_j) \nonumber\\
			& + \lambda^i\left(h^i+X^i-D^i+Y^i\right) + \sum_{j\in[N_i]}\Big(\overline{\mu}^i_j\left(C^i_j-(x^i_j-d^i_j)\right)\nonumber\\
			& +\underline{\mu}^i_j\left(x^i_j-d^i_j-C^i_j + Z\right)\Big) + \sum_{j\in[G_i]}\Big(\overline{\nu}^i_j(\overline{y}^i_j-y^i_j)\nonumber\\
			& + \underline{\nu}^i_j(y^i_j-\underline{y}^i_j)\Big)\Bigg) + \text{extra terms},
		\end{align}
	where $\lambda^i, \overline{\mu}^i_j,\underline{\mu}^i_j, \overline{\nu}^i_j, \underline{\nu}^i_j$ are the Lagrange multipliers corresponding to constraints~\eqref{eq:demsup} and~\eqref{eq:directcons_p}, and the ``extra terms" correspond to constraint~\eqref{eq:netcons}. Under Assumptions~\ref{assum:prosu} and~\ref{assum:gene}, it should be clear that the optimal solution $(x^i_j-d^i_j)^*\neq C^i_j$, $(x^i_j-d^i_j)^*\neq C^i_j-Z$, and ${y^i_j}^*\ne \overline{y}^i_j$, ${y^i_j}^*\ne \underline{y}^i_j$. Then, from the KKT optimality conditions, we have that $\overline{\mu}^i_j = \underline{\mu}^i_j = \overline{\nu}^i_j = \underline{\nu}^i_j = 0$. Further, we have that 
	\begin{subequations}
		\begin{align}
			\nabla_{(x^i_j-d^i_j)}\Lcal &= \frac{\partial u^i_j}{\partial (x^i_j-d^i_j)}\Big|_{(x^i_j-d^i_j)^*} + \lambda^i = 0,\label{eq:grad1}\\
			\nabla_{y^i_j}\Lcal &= -\frac{\partial c^i_j}{\partial y^i_j}\Big|_{{y^i_j}^*} + \lambda^i = 0,\label{eq:grad2}
		\end{align}
	\end{subequations}
	where~\eqref{eq:grad1} is equivalent to~\eqref{eq:prosopt} and~\eqref{eq:grad2} is equivalent to~\eqref{eq:geneopt}. 
\end{proof}

\begin{proof}[Proof of Lemma~\ref{lem:prosopt}]
	Consider prosumer~$j$ at location~$i$. By Assumption~\ref{assum:prosu}, $\frac{\partial u^i_j(z)}{\partial z}$ is continuous and ranges from~$\infty$ to~$0$. By Intermediate Value Theorem, for any given~$\lambda^i$ and~$p^i_j$, there exist a $(z_1,z_2)$ such that~\eqref{eq:z} holds. By strict concavity of~$u^i_j$, $\frac{\partial u^i_j(z)}{\partial z}$ is strictly decreasing, so the $(z_1,z_2)$ is unique.
	
	We next find the optimal solution to~\eqref{eq:aggpros}. We look at the two cases when $x^i_j >0$ and $x^i_j = 0$ separately. Recall that $p^i_j\le \lambda^i$. Then for any $x^i_j > 0$ and $d^i_j \ge x^i_j$, we have that
	\begin{align*}
		\pi^i_j\left(x^i_j,d^i_j\right)\le \pi^i_j\left(0, d^i_j-x^i_j\right).
	\end{align*}
	For any $x^i_j > 0$ and $0<d^i_j< x^i_j$, we have that 
	\begin{align*}
		\pi^i_j\left(x^i_j,d^i_j\right)\le \pi^i_j\left(x^i_j-d^i_j, 0\right).
	\end{align*}
	Therefore, we may without loss of generality restrict to the case when $x^i_j$ and $d^i_j$ are not both strictly positive. We can rewrite~\eqref{eq:aggpros} as
	\begin{align}\label{eq:aggpros1}
		&\pi_{j}^i(x^i_j, d^i_j)=
		\begin{cases}
			p^i_jx^i_j - P^i_j + u^i_j\left({C}^i_j-x^i_j\right), \text{ if } x^i_j>0,\\
			u^i_j\left(d^i_j + {C}^i_j\right) - \lambda^id^i_j,
			\hfill \text{ if }x^i_j=0.
		\end{cases} 
	\end{align}
\begin{itemize}
	\item If $C^i_j\le z_2$, then $u^i_j\left(C^i_j-x^i_j\right) + p^i_jx^i_j\le u^i_j\left(C^i_j\right)$ for any $x^i_j > 0$. Thus we have the optimal ${x^i_j}^* = 0$. If we further have that $C^i_j < z_1$, then the first order condition of $u^i_j\left(d^i_j+C^i_j\right) -\lambda^id^i_j$ leads to ${d^i_j}^* = z_1-C^i_j$. If we instead have $C^i_j\ge z_1$, then $u^i_j\left(d^i_j+C^i_j\right) - \lambda^id^i_j \le u^i_j\left(C^i_j\right)$ for any $d^i_j > 0$.
	\item If $C^i_j > z_2$, then $u^i_j\left(d^i_j+C^i_j\right)-\lambda^i d^i_j < u^i_j\left(C^i_j\right)$ for any $d^i_j > 0$. We thus have ${d^i_j}^* = 0$. When $x^i_j > 0$, the first order condition of $p^i_jx^i_j - P^i_j + u^i_j\left({C}^i_j-x^i_j\right)$ leads to ${x^i_j} = C^i_j-z_2$. Moreover, it is only optimal to have ${x^i_j}^* = C^i_j-z_2>0$ if $p^i_jx^i_j - P^i_j + u^i_j\left({C}^i_j-x^i_j\right) \ge u^i_j\left(C^i_j\right)$, or equivalently, $P^i_j\le p^i_j\left(C^i_j-z_2\right)+ u^i_j\left(z_2\right) - u^i_j\left(C^i_j\right)$. Therefore, we have that ${x^i_j}^*= \left(C^i_j-z_2\right)\cdot \mathds{1}\left\{\Xcal\right\}$, where $\Xcal=\{P^i_j\le p^i_j\left(C^i_j-z_2\right)+ u^i_j\left(z_2\right) - u^i_j\left(C^i_j\right)\}$.
\end{itemize}
\end{proof}

\begin{proof}[Proof of Lemma~\ref{lem:aggopt}]
	Consider prosumer~$j$ at location~$i$. Let $(z_1,z_2)$ be as in~\eqref{eq:z}. Then $z_1\le z_2$ since $p^i_j\le \lambda^i$. First note that when $C^i_j\le z_1\le z_2$, ${x^i_j}^* = 0$ by Lemma~\ref{lem:prosopt}, and $\Pi^i_j\left(P^i_j,p^i_j\right) = 0$ for any $(P^i_j,p^i_j)\in\mathbb{R}^2_+$, i.e., the aggregator~$\Acal$ earns zero profit from the prosumer regardless of her pricing decisions. 
	
	For those prosumers with $C^i_j > z_1$, we have that ${d^i_j}^* = 0$ from Lemma~\ref{lem:prosopt}.  Since the aggregator earns zero profit if the prosumer does not sell her energy, the aggregator would choose a~$p^i_j$ such that $C^i_j> z_2$ (because otherwise ${x^i_j}^*=0$) and thus ${x^i_j}^* = C^i_j-z_2$. We can add in the aggregator's problem the constraint that the prosumers would benefit from selling:
	\begin{equation}\label{eq:agg1}
		\begin{aligned}
			&\max_{P^i_j,p^i_j\ge 0} P^i_j + \left(\lambda^i-p^i_j\right){x^i_j}^*\\
			&\text{s.t. } p^i_j{x^i_j}^*+ u^i_j\left(C^i_j-{x^i_j}^*\right) - P^i_j\ge u^i_j\left(C^i_j\right),
		\end{aligned}
	\end{equation}
	where ${x^i_j}^*$ is the optimal response of the prosumer, as a function of $(P^i_j,p^i_j)$. We observe from~\eqref{eq:agg1} that, the optimal $P^i_j$ should satisfy $P^i_j =p^i_j{x^i_j}^*+ u^i_j\left(C^i_j-{x^i_j}^*\right) - u^i_j\left(C^i_j\right)$ in the maximization problem. Thus,~\eqref{eq:agg1} can be rewritten as
	\begin{align}
		&\max_{p^i_j\ge 0}p^i_j{x^i_j}^*+ u^i_j\left(C^i_j-{x^i_j}^*\right) - u^i_j\left(C^i_j\right) + \left(\lambda^i-p^i_j\right){x^i_j}^* \nonumber\\
		=&\max_{z_2\ge 0} u^i_j(z_2) - u^i_j\left(C^i_j\right)+\lambda^i\left(C^i_j-z_2\right),\label{eq:agg2}
	\end{align}
	and the first order condition of~\eqref{eq:agg2} leads to 
	\begin{align}
		\frac{\partial u^i_j(z_2)}{\partial z_2}\Big|_{z_2=z_2^*} = \lambda^i.
	\end{align}
	By~\eqref{eq:z}, we also have that $\frac{\partial u^i_j(z)}{\partial z}\Big|_{z=z_2^*} = {p^i_j}^*$. We thus conclude that ${p^i_j}^* = \lambda^i$. This further leads to $z_2^* = z_1$, and thus 
	\begin{align*}
		{P^i_j}^* &={p^i_j}^*\left(C^i_j-z_2^*\right)+ u^i_j\left(z_2^*\right) - u^i_j\left(C^i_j\right)\\
		&= \lambda^i\left(C^i_j-z_1\right) + u^i_j\left(C^i_j-z_1\right) - u^i_j\left(C^i_j\right).
	\end{align*}
\end{proof}


\begin{proof}[Proof of Proposition~\ref{prop:equi1}]
	In this proof, we show that the optimal solution to~\eqref{eq:so} can be used to construct an optimal solution to~\eqref{eq:soa}. 
	
	First note that the objectives of both problems have the same expression, i.e., $\Wcal_A({\bf h,x,d,y}) = \Wcal_{B}({\bf h,x-d,y}) = \sum_{i\in[n]} \left(\sum_{j\in[N_i]}u^i_j(d^i_j +{C}^i_j-x^i_j) -  \sum_{j\in [G_i]}c^i_j(y^i_j)\right).$ While we took $(\bf x-d)$ as a single vector of decision variables in solving~\eqref{eq:so} and obtained a unique optimal solution $\({\bf h^*, (x-d)^*, y^*}\)$ in Proposition~\ref{prop:compeq}, we can equivalently consider ${\bf x}$ and $\bf d$ as two separate vectors of decision variables. Consider the constraints~\eqref{eq:directcons_p} and constraints~\eqref{eq:aggcons_p}. For any ${\bf x,d,y}$ satisfying constraints~\eqref{eq:aggcons_p}, we have that ${\bf x-d\le C}$ and ${\bf x-d \ge C-Z}$, which implies constraints~\eqref{eq:directcons_p}. With the other constraints being the same, the feasible region of~\eqref{eq:soa} is a subset of the feasible region of~\eqref{eq:so}, which implies that $\Wcal_B^* \ge \Wcal_A^*$.
	
	Let $\({\bf h^*, (x-d)^*, y^*}\)$ be the optimal solution to~\eqref{eq:so}. Let $\bf x^* = \left[(x-d)^*\right]^+$ and $\bf d^* = \left[-(x-d)^*\right]^+$, then $\bf x^*-d^* = (x-d)^*$. We show that $\({\bf h^*, x^*, d^*, y^*}\)$ is feasible to~\eqref{eq:soa}. It suffices to show that $\bf 0\le x^*\le C$ and $\bf 0\le d^*\le Z-C+x^*$. By definition, $\bf x^*\ge 0$ and $\bf d^* \ge 0$. If $\bf (x-d)^* < 0$, then $\bf x^* = 0$ and $\bf d^* = -(x-d)^*\le Z-C = Z-C+x^*$. If $\bf (x-d)^* \ge 0$, then $\bf x^* = (x-d)^* \le C$ and $\bf d^* = 0$. In either case, we have that $\bf x^*,d^*$ are feasible to~\eqref{eq:aggcons_p}, and thus $\({\bf h^*, x^*, d^*, y^*}\)$ is feasible to~\eqref{eq:soa}. Therefore, this set of $\({\bf h^*, x^*, d^*, y^*}\)$ is an optimal solution to~\eqref{eq:soa}, and thus $\Wcal_A^* = \Wcal_B^*$. Further, the corresponding optimal Lagrange multipliers of constraints~\eqref{eq:demsup} are the same in problem~\eqref{eq:so} and problem~\eqref{eq:soa}, i.e., the wholesale market prices~${\pmb{\lambda}}$ are the same under both models.
	
	From Proposition~\ref{prop:compeq}, we know that ${y^i_j}^*$  and $\lambda_i$ satisfy~\eqref{eq:geneopt},  $\forall  i\in[n], \forall j\in[G_i]$. Moreover, $\left(x^i_j-d^i_j\right)^*$ and $\lambda^i$ satisfy~\eqref{eq:prosopt}, $ \forall i\in[n], \forall j \in[N_i]$. Consider an arbitrary prosumer~$j$ at an arbitrary location~$i$. With $p^i_j = \lambda^i$, $(z_1,z_2)$ as defined in~\eqref{eq:z}, and ${P^i_j} = \left[\lambda^i(C^i_j-z_1) + u^i_j(z_1) - u^i_j(C^i_j)\right]^+$, we have that $z_1=z_2 = C^i_j-(x^i_j-d^i_j)^*$. If $C^i_j \le z_2 = C^i_j-(x^i_j-d^i_j)^*$, then $(x^i_j-d^i_j)^* <0$, which implies that ${d^i_j}^* = \left[-(x^i_j-d^i_j)^*\right]^+ = \left[z_1-C^i_j\right]^+$ and ${x^i_j}^* =\left[(x^i_j-d^i_j)^*\right]^+= 0$. If $C^i_j > z_2 = C^i_j-(x^i_j-d^i_j)^*$, then $(x^i_j-d^i_j)^* >0$, which implies that ${d^i_j}^* = \left[-(x^i_j-d^i_j)^*\right]^+ = 0$ and ${x^i_j}^* = \left[(x^i_j-d^i_j)^*\right]^+ = C^i_j-z_2$. In either case, the optimal response ${x^i_j}^*$ and ${d^i_j}^*$ matches those described in Lemma~\ref{lem:prosopt}. Further, by Lemma~\ref{lem:aggopt}, the choice of $p^i_j = \lambda^i$ and ${P^i_j} = \left[\lambda^i(C^i_j-z_1) + u^i_j(z_1) - u^i_j(C^i_j)\right]^+$ are optimal for the aggregator to maximize her profit from the prosumer~$j$. We have thus verified all statements listed in Proposition~\ref{prop:equi1}.\end{proof}

\begin{proof}[Proof of Theorem~\ref{thm:eff}]
	Immediately follows from the proof of Proposition~\ref{prop:equi1}.
\end{proof}

\end{document}